\documentclass[12pt]{article}
\usepackage[T1]{fontenc}
\usepackage{mathrsfs}
\usepackage{amssymb}	
\usepackage{amsthm}
\usepackage{xcolor}
\usepackage{amsmath}
\usepackage{multirow}
\usepackage{nccmath}
\usepackage{rotating}
		
\usepackage[left = 2cm, right = 1.5cm, bottom = 5cm, top = 4cm]{geometry}

\DeclareMathOperator   {\tr}      {tr}
\newcommand{\dd}{{\operatorname{d}}}
\newcommand{\ii}{\mathrm{i}}
\newcommand{\R}{\mathbb R}
\newcommand{\Id}{\operatorname{Id}}
\newcommand{\gl}{\operatorname{gl}}
\newcommand{\weg}[1]{}

\theoremstyle{plain}
\newtheorem{Theorem}{Theorem}
\newtheorem{Lemma}{Lemma}

\newtheorem{Remark}{Remark}
\newtheorem{Proposition}{Proposition}

\usepackage{graphicx}
\graphicspath{ {./images/} }

\title{Beltrami problem in dimension two: local normal forms}
\author{ Dinmukhammed Akpan\footnote{Institut f\"ur Mathematik, Friedrich Schiller Universit\"at Jena, 07743 Jena, Germany, Institute of Mathematics and Mathematical Modeling, Almaty, Kazakhstan, dinmukhammed.akpan@uni-jena.de}, Alexey Bolsinov\footnote{School of Mathematics, Loughborough University, Loughborough LE11 3TU, UK and Institute of Mathematics and Mathematical Modeling, Almaty, Kazakhstan, A.Bolsinov@lboro.ac.uk}}

\begin{document}
\maketitle
\tableofcontents

\section{Introduction}\label{sect1}

Two (pseudo-)Riemannian metrics $g, \bar{g}$ are called \textit{geodesically equivalent} if their geodesics coincide as unparametrized curves.
The theory of geodesically equivalent metrics has a deep and extensive history. The earliest known examples were already constructed by Lagrange \cite{lagrange1779}, and foundational contributions were later made by prominent mathematicians such as Beltrami \cite{beltrami1865, beltrami1868, beltrami1869}, Levi-Civita \cite{levicivita1896}, Painlev\'e \cite{painleve1897}, Lie \cite{lie1882}, Liouville \cite{liouville1889}, Fubini \cite{fubini1903}, Eisenhart \cite{eisenhart1927, eisenhart1923}, Weyl \cite{weyl1921}, and Thomas and Veblen \cite{thomas1945, veblen1923, veblen1926}. Between the 1950s and 1990s, the subject became a central focus within the Soviet and Japanese schools of differential geometry (see surveys \cite{aminova2003, mikes1996} and \cite{yano1940, yano1956, yano-nagano1957}). In recent years, the theory has experienced a resurgence, driven by the introduction of modern mathematical techniques, particularly those from the theory of integrable systems \cite{matveevtopalov1998, matveev-topalov2001} and parabolic Cartan geometry \cite{bryant2009, eastwood2007}. These approaches have enabled researchers to solve numerous classical problems over the past decade, including those formulated by Sophus Lie \cite{bryant2008, matveev2012}, the Lichnerowicz conjecture \cite{matveev2007}, and the Weyl--Ehlers problems \cite{kiosak2009, matveev2012_gr}.  More recently, in \cite{BKM-1, Bolsinov-2025, nijapp5} new methods for studying geodesically equivalent metrics were proposed, based on their connection with Nijenhuis geometry.
   

In 1865, E. Beltrami posed the problem of \textit{local description of all geodesically equivalent pairs} \cite{beltrami1865}. The first result was obtained by U. Dini  in 1869 \cite{dini1869}, who described geodesically equivalent Riemannian metrics in a neighborhood of a generic point in dimension two. 

Later, in 1896, T. Levi-Civita generalized Dini's result to arbitrary dimensions \cite{levicivita1896}.  In the pseudo-Riemannian case, the local description of geodesically equivalent metrics at generic points was obtained in \cite{tams}, see also \cite{bolsinov-matveev-pucacco2009} for dimension 2.

For a long time, the question of describing geodesically equivalent metrics in a neighbourhood of a singular point remained open. In 1983, V. Kolokoltsov \cite{kolokoltsov1983}, and later A. Bolsinov, V. Matveev, and A. Fomenko \cite{bolsinov-matveev-fomenko1998}, in the Riemannian case, obtained  a description of singular points  for a geometric problem in dimension 2, which was equivalent to the question asked by Beltrami.    

In this paper, we solve the Beltrami problem in the remaining, most difficult case in dimension 2 and  provide a complete description of geodesically equivalent pseudo-Riemannian metrics in a neighbourhood of a singular point. 

\begin{center}
\footnotesize
In dimension two, we have the following table of results:

\medskip
\renewcommand{\arraystretch}{1.2}
\setlength{\tabcolsep}{6pt}

\begin{tabular}{p{1.4cm}|p{4.5cm}|p{4.8cm}}
    & \textbf{Riemannian} & \textbf{Pseudo-Riemannian} \\
    \hline
    \parbox[t]{1.2cm}{\centering \textbf{Generic \quad }} 
    & U.\,Dini (1869) 
    & A.\,Bolsinov,\,V.\,Matveev, G.\,Pucacco (2009) \\
    \hline
    \parbox[t]{1.2cm}{\centering \textbf{Singular \quad }} 
    & V.\,Kolokoltsov\,(1982), A.\,Bolsinov,\,V.\,Matveev, A.\,Fomenko (1998) 
    & New results \\
\end{tabular}
\end{center}

Instead of dealing with the Beltrami problem directly, we solve an equivalent problem and describe normal forms of pseudo-Riemannian metrics whose geodesic flows admit a first integral quadratic in momenta   (see \cite{kolokoltsov1983, bolsinov-matveev-fomenko1998} for the Riemannian case).  The equivalence between these two problems follows from an important and non-trivial observation due to Matveev and Topalov  \cite{matveev-topalov2001} (see also \cite{bolsinov-matveev-fomenko1998}).  

\medskip

{\bf Fact.}  In dimension 2,  a metric $g$ admits a non-trivial geodesically equivalent metric $\bar g = \sum_{i,j=1}^2 \bar g_{ij}\, \dd x^i \dd x^j$ if and only if the geodesic flow of $g$ admits a non-trivial first integral  $F = \sum_{i,j=1}^2 f^{ij}(x) p_i p_j$ quadratic in momenta.  Moreover, if we think of $F$, $g$ and $\bar g$ as $2\times 2$ symmetric matrices, then 
    \begin{equation}\label{quad_int_and_metric_formula}
         F = \Big( \frac{\det g}{\det \bar{g}} \Big)^{\frac{2}{3}} g^{-1} \bar{g} g^{-1} \quad\mbox{or, equivalently,}\quad  \bar{g} = \frac{1}{(\det g \det F)^2}\, gFg.
    \end{equation}

\medskip

The problem we consider in this paper is to describe locally  the pairs $ (g, F) $, where $g$ is a pseudo-Riemannian metric and $F$ is a quadratic integral of its geodesic flow. More specifically, we address the following question:
\begin{center}
    \textit{What are normal forms for such a metric $g$ and quadratic integral $F$ near a singular point?}
\end{center}

After answering this question, the normal forms for $g$ and $\bar g$ can be immediately obtained by \eqref{quad_int_and_metric_formula}.

\medskip
Let us say a few words about why, in our opinion, this problem has only now been solved.   The singularities we are interested in are primarily related to bifurcations of the spectrum (more precisely, of algebraic type) of the Killing $(1,1)$-tensor  $K$  naturally related to the quadratic integral $F$. In the Riemannian case,  the operator $K$ is self-adjoint and, therefore, diagonalises at every point. In the pseudo-Riemannian case, bifurcations of algebraic type of $K$ are harder to understand due to the presence of Jordan blocks, so the Beltrami problem for singular points seemed to be difficult to attack without additional tools.  Such tools have become available thanks to recent advances in Nijenhuis Geometry \cite{BKM-1, BKM-3, nijapp5}.

\medskip
The key idea is to replace $K$ with an operator $L$ defined by formula \eqref{operator}. This operator has the same algebraic type as $K$, but also   has the additional advantage of being a Nijenhuis operator. This allows us, in particular, to exploit the local classification of $\gl$-regular Nijenhuis operators in dimension 2 \cite{BKM-3}, each of which yields a pair of geodesically equivalent metrics via a certain explicit formula from \cite{nijapp5}.  This direct link between Nijenhuis geometry and the Beltrami problem in dimension 2 provides a fairly clear method for solving the latter. We applied this approach early in our work, which ultimately led us to the structure of the proof presented below.  While the above connection to the results of \cite{BKM-3, nijapp5} remains behind the scenes in the present version of the proof, at each step we compared it with the classification results from \cite{BKM-3} for consistency, and we plan to discuss this important connection in more detail elsewhere.

Throughout this paper, we make two general assumptions:
\begin{itemize}

\item we treat the Beltrami problem in the real analytic setting, that is, all the objects we are dealing with are assumed to be real analytic;

\item we treat the Beltrami problem in the local setting, that is,  the manifold $M^2$ is a small neighbourhood of a point,  e.g., a neighbourhood of $(0,0)\in\R^2$.  

\end{itemize}


The authors sincerely thanks V.\,Matveev, A.\,Konyaev, and A.\,Glutsyuk for their valuable remarks and constructive feedback. This research was supported by the Ministry of Science and Higher Education of the Republic of Kazakhstan (grant No. AP23483476) and by the DFG 529233771.

\section{Admissible singularities}\label{sect2}

Consider a pseudo-Riemannian metric $g=\bigl( g_{ij}
\bigr)$ on $M^2$  and its geodesic flow treated as a Hamiltonian system of $T^*M^2$ with the Hamiltonian $H = \frac{1}{2} \sum_{i,j=1}^2 g^{ij}(x)p_i p_j $, where $ (x_1, x_2, p_1, p_2)$ are canonical coordinates on $ T^*M^2 $.  We assume that this geodesic flow admits a non-trivial first integral 
$F = \sum_{i,j=1}^2 a^{ij}(x)p_i p_j$ quadratic in momenta,  so that $H$ and $F$ commute with respect to the canonical Poisson bracket on $T^*M^2$. By the non-triviality of the integral, we mean that $ F \neq \operatorname{const} \cdot H$. 

Our purpose is to describe local normal forms of $g$ and $F$ at singular points, and we start with explaining what kind of singularities we have in mind.  The Hamiltonian $H$ and the first integral $F$ can be understood as a pair of (contravariant) quadratic forms.  By an appropriate change of a basis, they can be reduced to a certain normal form which basically depends on the algebraic properties of the operator $HF^{-1}$. For instance, if this operator admits two different real eigenvalues, then $H$ and $F$ can be simultaneously diagonalised. We say that a point $\mathsf p\in M^2$ is {\it generic}, if the algebraic type of $HF^{-1}$ does not change in a neighbourhood of $\mathsf p$.  Otherwise, $p$ is considered to be {\it singular}.  The generic case was completely understood in \cite{bolsinov-matveev-pucacco2009}.

In the context of geodesically equivalent metrics, instead of $HF^{-1}$ it is more convenient to consider the operator
\begin{equation} \label{operator}
   L = \det g \cdot \det F \cdot g^{-1} F^{-1}. 
\end{equation}
It differs from $HF^{-1}$ by a scalar factor and therefore has the same algebraic type. However, $L$ possesses one additional remarkable property of being a {\it Nijenhuis operator} \cite{eqm}.

Recall that in dimension 2, operators may have 4 different algebraic types depending on their Jordan normal form over $\R$: 
\begin{itemize}
\item $
\begin{bmatrix}
    \lambda & 0 \\
    0 & \mu
\end{bmatrix}$,  \quad  two different real eigenvalues $\lambda,\mu\in\R$, $\lambda\ne \mu$; 

\item
$
\begin{bmatrix}
    \alpha & \beta \\
    -\beta & \alpha
\end{bmatrix},$ \  two complex conjugate eigenvalues $\lambda=\alpha\pm \ii \beta$, $\beta\ne 0$; 

\item
$\begin{bmatrix}
    \lambda & 1 \\
    0 & \lambda
\end{bmatrix}, 
$
\quad  Jordan block with a real eigenvalue $\lambda$;

\item $\begin{bmatrix}
    \lambda & 0 \\
    0 & \lambda
\end{bmatrix}, 
$
\quad   scalar matrix $\lambda\Id$.
\end{itemize}

In the pseudo-Riemannian case, there are generic points of the first three types \cite{bolsinov-matveev-pucacco2009}.  The scalar type,  in the generic situation, cannot appear as this would immediately imply that $H$ and $F$ are proportional with a constant factor, which contradicts to non-triviality of $F$.   At a singular point, however,  the operator $L$ can be scalar.  Moreover, we shall see below that there are many different scenarios of how the algebraic type may change.  Our first theorem describes all admissible metamorphoses of $L$ at a singular point $\mathsf p$ in terms of the discriminant of its characteristic polynomial. Since our analysis is local, without loss of generality we assume that $M^2 =\R^2(x_1, x_2)$ and $\mathsf p=(0,0)$.

\begin{Theorem} \label{thm1}
Let $g$ be a pseudo-Riemannian metric in dimension 2, $F$ be a quadratic integral of its geodesic flow and  $D = (\tr L)^2 - 4 \det L$ be the discriminant of the charactecteristic polynomial of the  operator $L$ defined by \eqref{operator}. 
    \begin{itemize}
        \item If $D\not\equiv 0$ and  $D =P_k(x_1,x_2) + \ldots$, where $P_k(x_1,x_2)$ is the first non-zero homogeneous polynomial in the Taylor expansion of $D$, $\deg P_k(x_1,x_2) =k$, then 
        \begin{itemize}
            \item[\rm{(i)}] $P_k(x_1,x_2)$ has at most two roots, namely $P_k= \operatorname{const} (a_1 x_1 + a_2 x_2)^m(b_1 x_1 + b_2 x_2)^s$,  $m\ge 0$, $s\ge 0$, $m+s =k$ and $a_i, b_i \in\R$;
            \item[\rm{(ii)}] There exist local coordinates $(x,y)$ such that $P_k(x,y) = \varepsilon x^m y^s$, $\varepsilon = \pm 1$, $m\le s$,
             with  either $m=0$ or $m=1$, or $(m,s)=(2,2)$. 
            \item[\rm{(iii)}] If $m=s=0$,  i.e.,  $D(\mathsf p)\ne 0$, then $\mathsf p$ is generic. 
            \item[\rm{(iv)}] If $m=0$ and $s>0$, then $L(\mathsf p)$ is conjugate to a Jordan block.  
            \item[\rm{(v)}] If $m$ and $s$ are both greater than zero, then  $L(\mathsf p)=\lambda_0 \Id$.  
        \end{itemize}
            \item If $D \equiv 0$,  and $L(\mathsf p) \ne \lambda_0 \Id$, then $\mathsf p$ is generic.

        \item If  $D \equiv 0$,  and $L(\mathsf p) = \lambda_0 \Id$, then $\dd (L^i_j - \frac{1}{2} \tr L \cdot\delta^i_j) |_{\mathsf p} \ne 0$ for some $1\le i,j\le 2$.
 
           \end{itemize}
\end{Theorem}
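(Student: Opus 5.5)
The main tool will be the Nijenhuis property of $L$ from \eqref{operator}, used through the conservation laws it gives for the coefficients $\sigma_1=\tr L$, $\sigma_2=\det L$ of the characteristic polynomial. In dimension 2 these read
\begin{equation*}
L^*\dd\sigma_1=\sigma_1\dd\sigma_1-\dd\sigma_2,\qquad L^*\dd\sigma_2=\sigma_2\dd\sigma_1 .
\end{equation*}
Set $A=L-\tfrac12\sigma_1\Id$, the traceless part of $L$. By Cayley--Hamilton, $A^2=\tfrac14 D\,\Id$ with $D=\sigma_1^2-4\sigma_2$. A direct computation from the two identities gives $2\sigma_1\dd\sigma_1-4\dd\sigma_2=\dd D$ and
\begin{equation*}
A^*\dd D=D\,\dd\sigma_1 .
\end{equation*}
The whole of Theorem~\ref{thm1} will be extracted from this identity by comparing lowest-order Taylor terms at $\mathsf p=(0,0)$. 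I would supplement it with the $g$-self-adjointness of $L$ and the fact that $F$ is non-trivial. Non-triviality rules out $A\equiv0$ on an open set: otherwise $L$ is scalar there, so $F$ is proportional to $H$, and since $L$ is Nijenhuis the factor is forced to be constant.

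Case $D(\mathsf p)\neq0$ (so $k=0$). The algebraic type of $L$ is locally constant, which is (iii).

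Case $D(\mathsf p)=0$ but $A(\mathsf p)\neq0$. Then $A(\mathsf p)$ is a nonzero nilpotent, so $L(\mathsf p)$ is a Jordan block. In $A^*\dd D=D\dd\sigma_1$ the lowest-order term on the left is $A(\mathsf p)^*\dd P_k$, of degree $k-1$, while the right-hand side has order $\ge k$. Hence $A(\mathsf p)^*\dd P_k=0$. Since $\ker A(\mathsf p)^*$ is one-dimensional, $\dd P_k$ is everywhere proportional to one fixed covector $\ell$. Therefore $P_k=\operatorname{const}\cdot\ell^k$, and choosing $x$ with $y=\ell$ gives $m=0$, $s=k$. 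This proves (iv), and (i), (ii) in this case.

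Case $L(\mathsf p)=\lambda_0\Id$, i.e.\ $A(\mathsf p)=0$. Write $A=A_1+A_2+\dots$ in homogeneous components and $\sigma_1=\sigma_1(\mathsf p)+c(x)+\dots$ with $c$ linear. The lowest-order part of the identity is
\begin{equation*}
A_r^*\dd P_k=P_k\,\dd c ,
\end{equation*}
where $A_r$ is the first nonzero component of $A$. Together with $D=-4\det A$ this gives $P_k=-4\det A_r$ whenever the latter is $\not\equiv0$. If $r=1$ and $\det A_1\not\equiv0$, then $P_2$ is a quadratic form, so it has at most two real root lines. Rank and sign considerations give the normal forms $\pm y^2$, $\pm xy$ and $\pm(x^2+y^2)$; the last I expect to exclude using the Nijenhuis structure. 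The remaining possibilities ($\det A_1\equiv0$, or $r\ge2$) must be controlled through the Nijenhuis equation itself, not only through its trace consequences. The Nijenhuis condition at lowest order says that $A_1$, viewed as a linear operator field, is a linear Nijenhuis operator, equivalently a two-dimensional left-symmetric algebra with $\tr A_1=0$. I would run through the short list of such algebras and compute the next Taylor order of $D$ in each. The claim to verify is that one can only get $x^m y^s$ with $m\in\{0,1\}$ (the $s$ linear-form case), or the degenerate case $\det A_1\equiv0$ producing $P_4=\pm x^2y^2$, so $(m,s)=(2,2)$. In every subcase $m,s>0$ can occur only when $A(\mathsf p)=0$, which is (v).

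Case $D\equiv0$. Then $A^2\equiv0$. If $A(\mathsf p)\neq0$, $L$ is a Jordan block on a whole neighbourhood, so $\mathsf p$ is generic. If $A(\mathsf p)=0$, then $A\not\equiv0$ by non-triviality. Write $A=f\,v\otimes\alpha$ with $\alpha(v)=0$, and suppose $A$ vanishes to order $\ge2$. The Nijenhuis torsion of $L=\tfrac12\sigma_1\Id+A$, expanded at lowest order, forces $\dd\sigma_1$ and $\dd f$ into an incompatible relation; I would aim to show it forces $A\equiv0$, a contradiction. This gives the last bullet.

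The main obstacle is the scalar case: proving that only $m\le1$ or $(m,s)=(2,2)$ occurs. This requires a genuine second-order Taylor analysis of the Nijenhuis equation, combined with the classification of linear Nijenhuis operators in dimension 2; I expect the $(2,2)$ and higher-vanishing subcases to need the most care.
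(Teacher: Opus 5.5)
Your treatment of the case where $L(\mathsf p)$ is a Jordan block is correct, and it is a nice coordinate-free argument. From $A^*\dd D=D\,\dd\sigma_1$ you get $A(\mathsf p)^*\dd P_k=0$, hence $P_k=\mathrm{const}\cdot\ell^k$, and by contraposition this also gives (v).

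The gap lies in everything else:
\begin{itemize}
\item the scalar case, which is needed for (i) and (ii), and also for (iv), since (iv) requires that $P_k$ cannot have a single root line when $A(\mathsf p)=0$;
\item the last bullet.
\end{itemize}
There you rely only on three facts: $L$ is Nijenhuis, $L$ is $g$-self-adjoint, and $A\not\equiv0$. These conditions do not see the conformal factor of $g$. The Nijenhuis condition does not involve $g$, and self-adjointness depends only on the conformal class. The restrictions in the theorem, however, come precisely from $\lambda(\mathsf p)\neq0$.

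In fact, the claims you propose to verify are false for such operators. Take $g=2\,\dd x\,\dd y$ with $x=u+v$, $y=u-v$, a holomorphic function $\phi=\alpha+\ii\beta$ of $z=u+\ii v$, and $L=\begin{bmatrix}\alpha&\beta\\-\beta&\alpha\end{bmatrix}$. This $L$ is $g$-self-adjoint (equal diagonal entries), and it is Nijenhuis: its torsion reduces to $\alpha_x+\beta_y=0$, $\alpha_y-\beta_x=0$, which are the Cauchy--Riemann equations. Its discriminant is $D=-4\beta^2$.
\begin{itemize}
\item For $\phi=z$ one gets $L(0)=0$ and $D=-(x-y)^2$. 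This is exactly the $\pm y^2$ form on your list, which you never exclude, and it contradicts (iv).
\item For $\phi=z^3$ one gets $D=-4v^2(3u^2-v^2)^2$, which has three distinct real root lines and contradicts (i).
\item For the last bullet, $L=\begin{bmatrix}0&0\\y^2&0\end{bmatrix}$ is Nijenhuis and self-adjoint with $D\equiv0$. Here $A$ vanishes to second order at $0$ without being identically zero, so no analysis of the Nijenhuis torsion can force $A\equiv0$.
\end{itemize}
A smaller point: your lowest-order identity $A_r^*\dd P_k=P_k\,\dd c$ only balances degrees when $r=1$.

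What is missing is the metric-dependent part of $\{H,F\}=0$. In light-like coordinates, $g=2\lambda\,\dd x\,\dd y$, it gives $a=a(x)$ and $c=c(y)$, so $D=4\lambda^2a(x)c(y)$. This yields (i), (iv) and (v) at once: the root lines are the coordinate lines, and $L(\mathsf p)$ is scalar if and only if $a(0)=c(0)=0$. The restrictions $m\le 2$, $(m,s)=(2,2)$ if $m=2$, and $s\le1$ when $D\equiv0$ then come from the lowest Taylor coefficients of the second-order equation for $\lambda$ obtained by eliminating $b$. These coefficients are of the form $s(s-1)\widehat c(0)\lambda(0,0)$ and must vanish, which is impossible since $\lambda(0,0)\neq0$.

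Also, every scalar field $\mu\Id$ is Nijenhuis. So your justification that $F=\mu H$ forces $\mu$ to be constant is wrong; use $\{H,\mu H\}=H\{H,\mu\}$ instead.
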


\begin{proof}
Since the discriminant of the characteristic polynomial of $L$ has an invariant meaning,  we can verify the statements of Theorem \ref{thm1} in our preferred coordinate system.
Throughout the paper, we will be working in the light-like coordinates $(x,y)$, i.e., such that the basis vectors $\partial_x$ and $\partial_y$ are light-like.  In dimension 2, such coordinates exist for any pseudo-Riemannian metric, and in these coordinates,   the metric takes the form $ g = 2\lambda(x, y)\dd x \dd y$, $\lambda(x,y)\ne 0$.

\begin{Lemma}  Let 
$g = 2\lambda(x, y)\dd x \dd y $. Then $F=a(x,y)p_x^2 + b(x,y)p_x p_y  + c(x,y) p_y^2$ is  a quadratic integral of the geodesic flow of $g$ if and only if the functions $\lambda$, $a$, $b$ and $c$ satisfy the following PDE system:
 \begin{equation}
 \label{poisson br}
\begin{cases}
    a_y = 0 \\
    c_x = 0 \\
    \lambda a_x + 2 \lambda_x a+ (\lambda b)_y   = 0 \\
    \lambda c_y + 2\lambda_y c \, + (\lambda b)_x   = 0
\end{cases}
\end{equation}
Moreover, if $a=a(x)$ and $c=c(y)$  (as it follows from the first  two relations), then $\lambda$ satisfies the second order PDE
\begin{equation}
\label{main eq}
 2\bigl(c(y)\lambda_{yy} - a(x)\lambda_{xx}\bigr) + 3\bigl(c'(y)\lambda_y - a'(x) \lambda_x\bigr) + \lambda \bigl(c''(y) - a''(x)\bigr) = 0.
\end{equation}
\end{Lemma}

\begin{proof}
The condition that $F$ is a first integral of the geodesic flow of $g$ is equivalent to the relation $\{H, F\}=0$, where $\{\, , \,\}$ is the canonical Poisson bracket on $T^*M^2$ and $H = \frac{1}{2\lambda} \, p_x p_y$ is the  Hamiltonian of the geodesic flow.  The bracket of two quadratic functions is a homogeneous cubic polynomial in $p_x$ and $p_y$, and the above four relations in \eqref{poisson br} amount to the vanishing of the four coefficients of this polynomial.

Next,  we differentiate the 3rd  and 4th equations of \eqref{poisson br} by $x$ and $y$ respectively and subtract one from the other. As a result, we eliminate $b$ from the equations and obtain  \eqref{main eq}. 
\end{proof}

In the light-like coordinate system, the operator $L$ defined by \eqref{operator} and the discriminant of its characteristic polynomial take the form
\begin{equation}\label{discriminant}
L = \begin{bmatrix}
    \frac{\lambda b}{2} & -a\lambda \\
    -c\lambda & \frac{\lambda b}{2}
\end{bmatrix}, \quad D = (\lambda b)^2 - 4\lambda^2\left(\frac{b^2}{4} - ac\right) = 4\lambda^2(x,y) a(x)c(y).
\end{equation}

If $D\not\equiv 0$,  then the singular points are exactly those where the eigenvalues collide, i.e., $D=0$. Hence, the set of singular points is defined by the equation $a(x) c(y) = 0$.  Let us set $a(x) = x^m \widehat{a}(x)$ and $c(y) = y^s\widehat{c}(y)$  where $\widehat a(0)\ne 0$ and $\widehat c(0)\ne 0$ so that $m$ and $s$ are the orders of zero of the functions $a$ and $c$ at $x=0$ and $y=0$ respectively.  Then the first non-zero homogeneous term in the Taylor expansion of $D$ will be  $P_{m+s}(x,y) = x^m y^s \widehat{a}(0) \widehat{c}(0)$, which proves item (i).  Items (iii), (iv) and (v) immediately follow from the above explicit form of $L$.

Let us prove the statement (ii), which imposes restrictions on the values of $m$ and $s$.    First of all, without loss of generality we may assume that $m\le s$, otherwise we interchange $x$ and $y$.  Next, we use relation \eqref{main eq} as a PDE equation for $\lambda$ and the fact that $\lambda(0,0)\ne 0$.
\weg{ Let us show that for $(m,s) \in \{m \geq 2, s \geq 2 \} \setminus (2,2)$,  all the solutions of (\ref{main eq})  vanish at the origin, i.e.,  $\lambda(0,0) = 0$.}

 After substitution $c(y) = y^s \widehat{c}(y)$, $a(x) = x^m \widehat{a}(x)$, $\widehat{a}(0) \widehat{c}(0) \neq 0$,  the equation \eqref{main eq} becomes
    \begin{equation}
    \label{eq:lambda3}
    2\bigl(y^s\widehat{c} \lambda_{yy} - x^m\widehat{a}\lambda_{xx}\bigr) + 3\bigl([y^{s}\widehat{c}]_y\lambda_y - [x^{m}\widehat{a}]_x\lambda_x\bigr) + \lambda\bigl([y^s\widehat{c}]_{yy} - [x^m\widehat{a}]_{xx}\bigr) = 0.
    \end{equation}
    
    First assume that $m > 2$, $s\ge m > 2$.   Consider the Taylor expansion of \eqref{eq:lambda3} at $(x,y)=(0,0)$ and take the terms corresponding to $y^{s-2}$ and $x^{m-2}$. They are as follows:
    $$
   s(s-1)\lambda(0,0) \widehat c(0) y^{s-2} \quad \mbox{and} \quad    m(m-1)\lambda(0,0) \widehat a(0) x^{m-2}.
    $$   
 Since these terms must vanish identically and  $\widehat c(0)\ne 0$, $\widehat a(0)\ne 0$, we conclude that $\lambda(0,0)=0$ which contradicts the nondegeneracy of $g$.  Thus, $m\le 2$.
 
 If $m=2$ and $s>2$, then the constant term of the Taylor expansion of  \eqref{eq:lambda3} is  $2\lambda(0,0) \widehat a(0)$, which again leads to $\lambda(0,0)=0$.  Hence, this case is impossible either.
    
The case $s=m=2$ is allowed but under one additional condition, as explained in the next lemma.

\begin{Lemma} \label{res lemma}  Let $s=m=2$, i.e., $F = x^2\widehat{a}(x)p^2_x + b(x,y)p_xp_y + y^2\widehat{c}(y)p^2_y$, with $\widehat{a}(0)\ne 0$ and $\widehat{c}(0) \neq 0$. Then  $\widehat{a}(0) = \widehat{c}(0)$.
\end{Lemma}

\begin{proof}
Substituting $a(x) = x^2\,\widehat{a}(x)$, $c(y) = y^2\,\widehat{c}(y)$ into \eqref{main eq} gives
$$
2\lambda(\widehat{a} - \widehat{c}) = 2(y^2\widehat{c} \lambda_{yy} - x^2\widehat{a}\lambda_{xx}) + 3([y^2\widehat{c}]'\lambda_y - [x^2\widehat{a}]'\lambda_x) + \lambda([4y\widehat{c}' + y^2\widehat{c}''] - [4x\widehat{a}' + x^2\widehat{a}'']).
$$

At the point $(x,y)=(0,0)$, we obtain  $2\lambda(0,0)\bigl(\widehat{a}(0) - \widehat{c}(0)\bigr)=0$. Since $\lambda(0,0) \ne 0$,  we get $\widehat{a}(0) = \widehat{c}(0)$, as stated.
\end{proof}

\medskip

It remains to consider the case when $ D \equiv 0 $.     This means that either $a(x)\equiv 0$ or $c(y)\equiv 0$  (but not both, as in this situation $F$ would be proportional to $H$ which contradicts the non-triviality of the integral $F$).   We will assume that $a(x)\equiv 0$  (otherwise we simply interchange $x$ and $y$) and $ c(y) = y^s\,\widehat{c}(y)$, $\widehat c(0)\ne 0$. Let us show that either $ s = 0 $ or $ s = 1 $.

We again consider the equation \eqref{main eq}, which now becomes:
    $$
    s(s-1)\,\widehat{c}(y)\lambda + 3s\, y\, \widehat{c}(y)\lambda_y + 2\,y^2\widehat{c}(y)\lambda_{yy} = 0.
    $$
Substituting $(x, y) = 0$ gives $ s(s-1)\lambda(0, 0) = 0 $, which implies $ s = 0 $ or $ s = 1 $, since $\lambda(0,0)\ne 0$.

Let $s = 0$, that is,  $c(0) \neq 0$.  Then according to \eqref{discriminant}, we have
$$
L = \begin{bmatrix}
    \frac{\lambda b}{2} & 0 \\
    -c \lambda & \frac{\lambda b}{2}
\end{bmatrix}.
$$
Clearly, $L$ is a Jordan block at each point including $\mathsf p=(0,0)$, i.e., $\mathsf p$ is generic, as stated.

Now let $s = 1$,  that is, $c(y) = y\, \widehat{c}(y)$, $\widehat c(0)\ne 0$.  Then $L$ takes the form
$$
L = \begin{bmatrix}
    \frac{\lambda b}{2} & 0 \\
    -y\widehat{c} \lambda & \frac{\lambda b}{2}
\end{bmatrix}
$$
and, at the point $\mathsf p=(0,0)$,  it becomes a scalar operator $L(0,0)=\frac{\lambda b}{2} \Id$. In this case, $\mathsf p=(0,0)$ is singular as $L$ is a Jordan block for $y\ne 0$.   The operator $L - \frac{1}{2} \tr L \cdot \Id= \begin{bmatrix}
  0 & 0 \\
    -y\widehat{c} \lambda & 0
\end{bmatrix}$  vanishes at $\mathsf p$ but, informally speaking, it has zero of order one in the sense that the differential of one of its components is not zero. Indeed, $\frac{\partial}{\partial y} (-y\widehat c \lambda)|_{\mathsf p=(0,0)} = -\widehat c(0) \lambda (0,0) \ne 0$.  This completes the proof.
\end{proof}

\begin{Remark} {\rm Theorem \ref{thm1} describes, in particular, the local structure of the singular set $\mathsf{Sing}\subset \R^2$ where the algebraic type of $L$ changes.  If $D\not\equiv 0$, i.e., generically $L$ has two different eigenvalues, then  locally  $\mathsf{Sing}$ is either a smooth curve,  or a pair of transversally intersecting curves.  The first case corresponds to $(m,s)=(0,s)$ and in the light-like coordinates $(x,y)$ introduced in  the proof of Theorem \ref{thm1},  we have $\mathsf{Sing}=\{ y=0 \}$.   On this line,  the eigenvalues of $L$ collide, and $L$ becomes a Jordan block. If both $m>0$ and $s>0$,  then $\mathsf{Sing} = \{ x=0\}  \cup \{ y = 0\}$.   On each of these lines, $L$ is a Jordan block, but at the intersection point $\mathsf p=(0,0)$,  it takes the form $L = c\cdot\Id$, $c\in\R$. 

If $D \equiv 0$, then generically $L$ is a Jordan block, $\mathsf{Sing}$ is a  smooth curve on which $L$ becomes scalar, that is,  $L|_{\mathsf{Sing}}= f \cdot \Id$, where $f$ is not necessarily constant  (we will show in Theorem \ref{thm3}, case C4, that the directional derivative of $f$ along this curve is different from zero at each point).

}\end{Remark}

\section{Reducing $F$ to a pre-normal form}\label{sect3}

As explained above, the quadratic integral $F$ in a light-like coordinate system takes the form
$$
F(x,y) = a(x) p_x^2 + b(x,y)p_x p_y + c(y)p_y^2,
$$
where one or both functions $a(x)$ and $c(y)$ vanish at a singular point $\mathsf p=(0,0)$. In this section, our goal is to simplify these functions by using suitable coordinate transformations of the form $x_{\mathrm{new}} = u(x)$ and   
$y_{\mathrm{new}} = v(y)$. The new coordinates are still light-like so that all the above computations and arguments remain valid for them. We will also allow ourselves to interchange $x$ and $y$ and make rescaling of $F$ to make the final normal form of $F$ as simple as possible and unique. 

Since the transformations for $x$ and for $y$ are independent of each other,  the problem reduces to finding a normal form for the expression $a(x)p_x^2$,  so that the problem is, in fact, one-dimensional.

\begin{Lemma}\label{vec norm form}
Let  $ a(x) = x^m \widehat{a}(x)$, $\widehat{a}(0) > 0$, and $m \ge 0$. Then there exists a regular coordinate transformation $x_{\mathrm{new}} = u(x)$  after which the expression $a(x)p_x^2$  takes one of the following forms (we use the old notation $x$ for the new coordinate): 
\begin{itemize}
\item[{\rm (a1)}] $p_x^2$, if $m=0$;
\item[{\rm (a2)}]  $\widehat a(0) x^2 p_x^2$, if $m=2$;
\item[{\rm (a3)}]  $ \dfrac{x^{2k}}{(1 + \omega x^{k-1})^2}\, p_x^2 $, where $\omega=\operatorname{Res}_{z = 0}\frac{1}{z^k\sqrt{\widehat{a}(z)}}$, if $m=2k$,  $k=2,3,4, \dots$;
\item[{\rm (a4)}]  $x^m p_x^2$, if $m=2k+1$, $k=0, 1,2, \dots$

\end{itemize}
\end{Lemma}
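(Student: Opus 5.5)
The key observation is that under $x_{\mathrm{new}}=u(x)$ we have $p_x = u'(x)\,p_{x_{\mathrm{new}}}$. Hence $a(x)p_x^2$ transforms as the square of the vector field $\sqrt{a(x)}\,\partial_x$. More precisely, $a$ transforms like the square of the coefficient of a one-dimensional vector field: $\tilde a(u(x)) = a(x)\,u'(x)^2$. So the plan is to reduce the problem to classifying the vector field $v=\sqrt{\widehat a(x)}\,x^{m/2}\partial_x$ when $m$ is even, and to the dual question for the $1$-form $dx/\sqrt{a}$. The requirement is that the new coefficient $\tilde a$ equals the target $a_0(u)$. This is equivalent to
\[
\frac{u'(x)}{\sqrt{a_0(u(x))}} = \frac{1}{\sqrt{a(x)}},
\]
that is, $u$ must pull back the form $du/\sqrt{a_0(u)}$ to $dx/\sqrt{a(x)}$. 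The sign of the square root is fixed using $\widehat a(0)>0$, and $u$ is allowed to be orientation preserving.

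I will treat the cases in order.

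\textbf{Case $m=0$.} Set $u(x)=\int_0^x dz/\sqrt{a(z)}$. This is a regular analytic diffeomorphism and gives (a1).

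\textbf{Case $m$ odd, $m=2k+1$.} I look for $u = x\,\phi(x)$ with $\phi(0)>0$, so that $u^m = x^m \phi^m$. The equation $a(x)u'^2 = u^m$ becomes $\widehat a(x)(\phi + x\phi')^2 = \phi^{m}$. Rewriting, $\widehat a\,(x\phi^{1-m/2})'^2 = 1$ up to a constant factor. This amounts to integrating $x^{-m/2}\widehat a^{-1/2}$. Since $m/2$ is a half-integer, no logarithm or residue appears. Concretely, I write $\int x^{-m/2}\widehat a(x)^{-1/2}\,dx = x^{1-m/2}h(x)$ with $h$ analytic and $h(0)\ne0$. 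Then I solve $u^{1-m/2}/(1-m/2) = x^{1-m/2}h(x)$ by taking the appropriate root. For $m=1$ this is simply $\sqrt u=\sqrt x\,h(x)/2$ up to constants, giving $u = x\,\tilde h(x)^2$, which is analytic. In all odd cases a real analytic root exists because $h(0)$ has the right sign after choosing orientation. This gives (a4).

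\textbf{Case $m=2$.} Here the form is $\frac{dx}{x\sqrt{\widehat a}}$, which equals $\frac{1}{\sqrt{\widehat a(0)}}\,\frac{dx}{x}$ plus an exact analytic form. So $\int = \widehat a(0)^{-1/2}\log x + g(x)$. Setting $u = x\exp\!\big(\sqrt{\widehat a(0)}\,g(x)\big)$ gives $du/(u\sqrt{\widehat a(0)}) = dx/(x\sqrt{\widehat a})$. The residue, i.e. the linear coefficient $\sqrt{\widehat a(0)}$, is an invariant, which explains (a2).

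\textbf{Case $m=2k$, $k\ge2$.} This is the main obstacle. The form $\frac{dx}{x^k\sqrt{\widehat a}}$ has a pole of order $k$, and its residue $\omega$ is a coordinate invariant. The target form $\frac{(1+\omega u^{k-1})\,du}{u^k} = \frac{du}{u^k} + \omega\frac{du}{u}$ has the same pole order, leading coefficient $1$, and residue $\omega$. I will first normalize the leading coefficient by rescaling $x$; the statement's normal form presumes this can be absorbed, which is why $\widehat a(0)$ disappears. Then I invoke the classical fact that two meromorphic $1$-forms with a pole of order $k\ge2$, the same residue, and matching leading coefficient are analytically conjugate. To prove it, I write
\[
\int \frac{dx}{x^k\sqrt{\widehat a}} = -\frac{1}{(k-1)x^{k-1}}\,\psi(x) + \omega\log x,
\]
with $\psi$ analytic and $\psi(0)=1$ after rescaling. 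I then seek $u = x\,\theta(x)$ satisfying
\[
-\frac{1}{(k-1)u^{k-1}} + \omega\log u = -\frac{\psi}{(k-1)x^{k-1}} + \omega\log x.
\]
Multiplying through by $x^{k-1}$ gives an equation $G(x,\theta)=0$, where
\[
G = -\frac{\theta^{1-k}}{k-1} + \frac{\psi}{k-1} + \omega x^{k-1}\log\theta.
\]
At $(0,1)$ we have $\partial_\theta G = 1 \ne 0$, so the implicit function theorem gives an analytic $\theta$ with $\theta(0)=1$, completing (a3). The delicate points are the existence of the rescaling that normalizes the leading coefficient, and the sign and branch choices; I expect to handle these using $\widehat a(0)>0$.
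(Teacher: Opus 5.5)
Your proof is correct. In (a1), (a2) and (a4) it is essentially the paper's argument. The paper calls (a1) and (a2) standard, and its odd-case formula $u(x)=x\bigl(\sum_j \tfrac{2-m}{2j+2-m}A_jx^j\bigr)^{2/(2-m)}$ is exactly the root of $u^{1-m/2}/(1-\tfrac m2)=x^{1-m/2}h(x)$ that you take.

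The real difference is in (a3). The paper simply applies the local normal form theorem for one-dimensional analytic vector fields (Ilyashenko--Yakovenko, Theorem 4.29) to $x^k\sqrt{\widehat a(x)}\,\partial_x$. You instead do three things:
\begin{itemize}
\item pass to the dual meromorphic form $dx/(x^k\sqrt{\widehat a})$;
\item normalize its leading coefficient by the substitution $x=c\tilde x$ with $c=\widehat a(0)^{-1/(2k-2)}>0$, which leaves the residue $\omega$ unchanged;
\item prove the conjugacy yourself by the implicit function theorem applied to $G(x,\theta)$, using $G(0,1)=0$ (because $\psi(0)=1$) and $\partial_\theta G(0,1)=1$.
\end{itemize}
Your route is self-contained and makes explicit the leading-coefficient normalization, and the role of $\widehat a(0)>0$, that the paper's one-line citation leaves implicit. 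The paper's route is shorter and places the result within standard normal-form theory.

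The branch issue you worry about does not actually arise. $G$ is analytic near $(0,1)$ for both signs of $x$, and differentiating $x^{1-k}G(x,\theta(x))\equiv 0$ gives $(1+\omega u^{k-1})u'/u^k=1/(x^k\sqrt{\widehat a(x)})$ for all $x\ne 0$, with no logarithm of $x$ involved.

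There are two small slips in the odd case, neither affecting the result:
\begin{itemize}
\item The intermediate ``rewriting'' $\widehat a\,(x\phi^{1-m/2})'^2=1$ is not right. The relevant quantity is $(x\phi)^{1-m/2}=u^{1-m/2}$, but nothing later uses this line.
\item No orientation choice is needed, since $(1-\tfrac m2)h(0)=\widehat a(0)^{-1/2}>0$ automatically.
\end{itemize}
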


\begin{proof}
Let $m=2k$, then we can consider the square root $\sqrt{a(x) p_x^2}=x^k\sqrt{\widehat{a}(x)}\, p_x$ and think of this expression as a vector field on $\R$.  The cases $k=0$ and $k=1$ are standard and immediately lead to items (a1) and (a2). For $k>2$, we use the {\it local normal form} theorem (see \cite{ilyashenko2013}, Theorem 4.29)  for a vector field in dimension 1,  which states that our vector field can be reduced to the form $ \frac{x^k}{1 + \omega x^{k-1}} \partial_x $, where $ \operatorname{Res}_{z = 0}\frac{1}{z^k\sqrt{\widehat{a}(z)}} = \omega$. Thus, the even case $ m = 2k $ is resolved.

Consider the remaining case (a4) when $ m = 2k+1 $ is odd. We need to find $x_{\mathrm{new}} = u(x)$, $\frac{du}{\dd x}(0)\ne 0$, satisfying the equation $ \left(\frac{du}{\dd x}\right)^2 = \frac{u^m}{a(x)} $. It is straightforward to verify that the following function is a solution to our equation:
$$
u(x) = x \left( \sum_{j = 0}^{\infty} \tfrac{2 - m}{2j + 2 - m} A_j x^j \right)^{\frac{2}{2 - m}},
$$
where $ A(x) = \frac{1}{\sqrt{\widehat{a}(x)}} = \sum_{j = 0}^{\infty} A_j x^j $ is an analytic function, which easily implies the convergence of  $\sum_{j = 0}^{\infty} \tfrac{2 - m}{2j + 2 - m} A_j x^j$  so that $u(x)$ is real analytic. Notice the importance of the fact that $m=2k+1$ is odd, as otherwise we have division by zero in one of the coefficients of this power series. We also have $A_0\ne 0$ and hence  $\frac{du}{\dd x}(0)\ne 0$ as required. 
\weg{Let $m = 0$, then we have $a(x)p_x^2$ where $a(0) > 0$. In this case, we need to show that there exists a solution $u = u(x)$ to the ODE obtained from the coordinate change rule $a(x)p_x^2 = p_u^2 = p_x^2 \left(\frac{dx}{du}\right)^2$ such that $u'(0) \neq 0$. It is clear that $u = \int \frac{dx}{\sqrt{a(x)}}$ is a smooth solution and $u'(0) = \frac{1}{\sqrt{{a}(0)}} \neq 0$, and in the $u$-coordinate the form $a(x)p_x^2$ becomes $p_u^2$.

Let $m = 2$, then we have $x^2\widehat{a}(x)p^2_x$, where $\widehat{a}(0) > 0$. We need to show that there exists a solution of the ODE $x^2 \widehat{a}(x) = \widehat{a}(0) u^2 (\frac{\dd x}{\dd u})^2$ such that $u = xf(x)$, $f(0) \neq 0$. Indeed, let us consider 
$$
\frac{\dd u}{u \sqrt{\widehat{a}(0)}} = \frac{\dd x}{x \sqrt{\widehat{a}(x)}} = \Big( \frac{1}{x \sqrt{\widehat{a}(0)}} + \beta_1  + \beta_2 x + \ldots \Big) \dd x,
$$
where $\frac{1}{\sqrt{\widehat{a}(x)}} = \frac{1}{\sqrt{\widehat{a}(0)}} + \beta_1 x + \beta_2 x^2 + \ldots$ Taylor expansion of $\frac{1}{\sqrt{\widehat{a}(x)}}$. 

Finally after integrating 
$$
\ln(u) = \ln(x) + \sqrt{\widehat{a}(0)} \beta_1 x + \frac{\sqrt{\widehat{a}(0)} \beta_2 x^2}{2} + \ldots
$$
or
$$
u = x \underbrace{e^{\sqrt{\widehat{a}(0)} \beta_1 x + \frac{\sqrt{\widehat{a}(0)} \beta_2 x^2}{2} + \ldots}}_{f(x)}.
$$
}
\end{proof}

Using Lemma \ref{vec norm form}, we get the following list of pre-normal forms for $F$, where the function $b(x,y)$ is yet unknown; it will be described in the next sections.

\begin{Proposition} \label{int norm forms} In a neighbouhood of the point $\mathsf p=(0,0)$, there exists a light-like coordinate system in which the quadratic integral $F$ (perhaps, after suitable rescaling $F\mapsto \mathrm{const}\cdot F$) takes one of the following forms:  
    \begin{itemize} 
    \item $\mathsf p$ is generic:

    \begin{tabular}{lll}
{\rm (A1)}   &$F = \ p^2_x + b(x,y)p_xp_y + p^2_y$, &\quad $m=s=0$;\\

{\rm (A2)}    &    $F = -p^2_x + b(x,y)p_xp_y + p^2_y$, &\quad $m=s=0$;\\
        
{\rm (A3)}    &  $F =  \ \ b(x,y)p_xp_y + p^2_y$,  & \quad  $a(x)\equiv 0$, $c(0)\ne 0$; \\
     \end{tabular}

    \item $\mathsf{p}$ is singular and $L(\mathsf p)$ is $\operatorname{gl}$-regular (Jordan block)
    
   \begin{tabular}{lll}
{\rm (B1)}      &  $F  = p^2_x + b(x,y)p_xp_y + y^sp^2_y$, & \quad $m=0$, $s \geq 1$;\\

{\rm (B2)}       &  $F = p^2_x + b(x,y)p_xp_y + \frac{y^{2k}}{(1 + y^{k-1})^2} p^2_y$, & \quad $m=0$, $s=2k>2$;\\

{\rm (B3)}         &   $F = -p^2_x + b(x,y)p_xp_y + y^{2k}p^2_y$, & \quad $m=0$, $s=2k$;\\

{\rm (B4)}         &  $F = -p^2_x + b(x,y)p_xp_y + \frac{y^{2k}}{(1 + y^{k-1})^2}p^2_y$, & \quad $m=0$, $s=2k$; \\
    \end{tabular}

    \item  $\mathsf{p}$ is singular and $L(\mathsf p)$ is a scalar operator: 
    
  \begin{tabular}{lll}
{\rm (C1)}        & $F = xp^2_x + b(x,y)p_xp_y + y^sp^2_y$, & \quad $m=1$, $s \geq 1$;\\

{\rm (C2)}         & $F = x p^2_x + b(x,y)p_xp_y + \frac{y^{2k}}{(1 + y^{k-1})^2}p^2_y$, &\quad $m=1$, $s=2k>2$;\\

{\rm (C3)}       & $F = x^2p^2_x + b(x,y)p_xp_y + y^2p^2_y$, & \quad $m=2$, $s=2$; \\

{\rm (C4)}     & $F =  \ \ b(x,y)p_xp_y + yp^2_y$, &\quad $D\equiv 0$, $a(x)\equiv 0$, $c(y) = y\widehat c(y)$.\\
                  \end{tabular}

    \end{itemize}
\end{Proposition}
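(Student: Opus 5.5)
The plan is to combine the case analysis of Theorem \ref{thm1} with Lemma \ref{vec norm form}, applied separately to the $x$-part $a(x)p_x^2$ and the $y$-part $c(y)p_y^2$. Start from light-like coordinates in which $g=2\lambda\,\dd x\dd y$ and $F=a(x)p_x^2+b\,p_xp_y+c(y)p_y^2$. Any change $x_{\mathrm{new}}=u(x)$, $y_{\mathrm{new}}=v(y)$ keeps the coordinates light-like. It transforms $a(x)p_x^2$ and $c(y)p_y^2$ independently, and only changes $b$ by the factor $u'(x)v'(y)$. Since $b$ is left unspecified in every item of the list, it suffices to normalise $a$ and $c$. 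We may also use three further operations: swapping $x\leftrightarrow y$, rescaling $F\mapsto \mathrm{const}\cdot F$ (including multiplication by $-1$), and flips $x\mapsto -x$, $y\mapsto -y$.

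Lemma \ref{vec norm form} assumes $\widehat a(0)>0$. Before applying it, I handle signs as follows. For odd $m$, the flip $x\mapsto -x$ changes the sign of $x^m\widehat a(x)$, so the sign of $\widehat a(0)$ can be made positive. For even $m$ this is impossible, and the sign is an invariant. By an overall multiplication of $F$ by $\pm1$ we make the $y$-coefficient positive. The sign of the $x$-coefficient is then what distinguishes (A1) from (A2), and (B1)/(B2) from (B3)/(B4).

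The cases are then run through according to Theorem \ref{thm1}.

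\textbf{Generic case.} If $D(\mathsf p)\neq 0$, then $m=s=0$. Lemma \ref{vec norm form}(a1) applied to both variables gives $\pm p_x^2+b\,p_xp_y+p_y^2$, which is (A1) or (A2). If $D\equiv0$ with $L(\mathsf p)$ not scalar, we get $a\equiv0$ and $c(0)\ne0$, hence (A3).

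\textbf{Jordan-block case.} Here $m=0<s$. Normalise $x$ by (a1). Normalise $y$ by (a4) for odd $s$, by (a2) for $s=2$, and by (a3) for $s=2k\ge4$. In (a2) the constant $\widehat c(0)$ is removed by rescaling $F$, and then the $x$-coefficient is rescaled by a positive constant, which can be absorbed by a linear change of $x$. This yields $y^sp_y^2$ in (B1)/(B3). For (a3), if $\omega=0$ we land in (B1)/(B3). If $\omega\ne0$, a scaling $y\mapsto \alpha y$, together with rescaling of $F$ and of $x$, should bring $\omega$ to $1$, giving (B2)/(B4).

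\textbf{Scalar case.} Theorem \ref{thm1}(ii) gives $m=1$ with any $s\ge1$, or $m=s=2$. For $m=1$, normalise $x$ by (a4), possibly after the flip, to get $x\,p_x^2$. Normalise $y$ as in the Jordan-block case; the sign of the $y^{2k}$ term can now be absorbed using the flip in $x$ together with multiplication of $F$ by $-1$. This gives (C1)/(C2). For $m=s=2$, Lemma \ref{res lemma} gives $\widehat a(0)=\widehat c(0)$, so (a2) and a rescaling of $F$ yield (C3). Finally, $D\equiv0$ with $L(\mathsf p)$ scalar forces $a\equiv0$ and $s=1$, and (a4) gives (C4).

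I expect the main obstacle to be the normalisation $\omega\mapsto 1$ in (a3). Under $y\mapsto\alpha y$, the expression $\frac{y^{2k}}{(1+\omega y^{k-1})^2}$ becomes a multiple $\alpha^{2k-2}\frac{y^{2k}}{(1+\omega\alpha^{1-k}y^{k-1})^2}$. This multiple must be compensated by rescaling $F$ and then by a linear change of $x$, which is possible in cases (B) because $m=0$. In case (C2) the compensation instead goes through the weight of $x\,p_x^2$, and the same bookkeeping applies there.

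One point needs checking: the sign of $\omega$ can be fixed only when $k$ is even, or when a reflection is allowed. In case (B3)/(B4) the sign of the $x^2$ coefficient is invariant, so reflections must be used with care. I would verify this bookkeeping case by case. If a sign of $\omega$ cannot be removed, I would conclude that the statement implicitly allows $\omega$ up to a positive rescaling.
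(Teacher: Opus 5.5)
Your plan follows the paper's proof essentially step by step. It uses the case split from Theorem~\ref{thm1}, applies Lemma~\ref{vec norm form} independently to $a(x)p_x^2$ and $c(y)p_y^2$, fixes signs by reflections and $F\mapsto\pm F$, and uses Lemma~\ref{res lemma} for (C3). You are more careful than the paper in two places. First, you note that $\omega=0$ in (a3) lands in (B1)/(B3)/(C1). Second, you see that the normalisation $\omega\mapsto 1$ needs a sign argument. The paper simply asserts that $\omega$ ``can be made equal to $1$ by rescaling of $F$''. But $F\mapsto\mu F$ with $\mu>0$ only sends $\omega\mapsto\mu^{-1/2}\omega$. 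Taking $\mu<0$ makes the $y$-part negative, and for even $s$ such a form is not in the list.

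The step you left open is a genuine gap, and it cannot be closed: the claim is false for odd $k$. Your worry about (B3)/(B4) is misplaced, because $y\mapsto -y$ does not touch the $x$-part. It sends $\frac{y^{2k}}{(1+\omega y^{k-1})^2}$ to $\frac{y^{2k}}{(1+(-1)^{k-1}\omega y^{k-1})^2}$, so for even $k$ the sign of $\omega$ is removable in (B2), (B4) and (C2) alike.

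For odd $k$, the sign of $\omega$ is an invariant. Write $c(y)p_y^2=V\otimes V$ with $V=y^k\sqrt{\widehat c(y)}\,\partial_y$ and $\sqrt{\widehat c(0)}>0$.
\begin{itemize}
\item A change $y\mapsto v(y)$ multiplies the leading coefficient of $V$ by $v'(0)^{1-k}$, which is positive for odd $k$. So this branch of the square root is preserved.
\item $\omega$ is the residue of the dual form $\dd y/(y^k\sqrt{\widehat c(y)})$, and this residue is invariant under analytic coordinate changes.
\end{itemize}
Light-like coordinates are unique up to $x\mapsto u(x)$, $y\mapsto v(y)$ and a swap. Consider a pair $(g,F)$ with $a\equiv 1$ and $c(y)=\frac{y^6}{(1-y^2)^2}$, which exists by Proposition~\ref{prop:2}. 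By the above, it is not equivalent to any form in the list.

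So your fallback is the right conclusion, but state it as a correction rather than an ``implicit'' reading. For odd $k$, (B2), (B4) and (C2) must also include $\frac{y^{2k}}{(1-y^{k-1})^2}$, that is, $\omega\in\{1,-1\}$. The same correction applies to the corresponding normal forms in Theorems~\ref{thm2} and~\ref{thm3}.
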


\begin{proof}

The generic case (A)  is treated in \cite{bolsinov-matveev-pucacco2009}.  It easily follows from
item (a1) of Lemma \ref{vec norm form}.   

Next, as explained in the proof of Theorem \ref{thm1},   we can always choose light-like coordinates $x$ and $y$ in such a way that $F$ takes the form
$$
F = x^m \widehat a(x)p_x^2 + b(x,y)p_xp_y + y^s \widehat c(y) p_y^2,
$$
with $m$ and $s$ specified in item (ii),
or if $D\equiv 0$
$$
F =  b(x,y)p_xp_y + y \widehat c(y) p_y^2.  
$$
with $\widehat a(0) \ne 0$ and $\widehat c(0) \ne 0$.

It remains to use suitable transformations $x_{\mathrm{new}} = u(x)$,   $y_{\mathrm{new}} = v(y)$ to reduce the expressions $x^m \widehat a(x)p_x^2$ and $y^s \widehat c(y) p_y^2$ to their normal forms  from Lemma \ref{vec norm form}, taking into account the list of admissible pairs $(m,s)$ from Theorem \ref{thm1}.

To get the expressions from Proposition \ref{int norm forms},  we will need some additional steps.  Namely, we use the fact that for $s\ne 2$, the expressions of type $y^s p_y^2$ and $c \, y^s p_y^2$, $c>0$, are equivalent in the sense that they can be transformed to each other by a suitable rescaling $y \mapsto \alpha y$.  Moreover, if $s$ is odd, then $y^s p_y^2$ and $-y^s p_y^2$  are equivalent.  We are also allowed to change the sign of $F$ and even rescale it by multiplying with a positive constant.    By using these operations we obtain all the normal forms from Theorem \ref{thm2} except (B2), (B3), (C2) and (C4).

Unlike Lemma \ref{vec norm form},  in  (B2), (B3) and (C2)  the coefficient $\omega$ equals $1$.   We can always achieve this result by rescaling $F$. Indeed, the coefficient $\omega$ is the residue of the function $\frac{1}{\sqrt{z^k\widehat c(z)}}$ so it can be made equal to $1$ by rescaling of $F$.  After this, the coefficient in front of $\pm p_x^2$ or $xp_x^2$ will change,  but we can make it equal $1$ by rescaling $x$.   This argument shows that $\omega$ can always be eliminated.

Finally, in the case  $m=s=2$, we first use item (a2) of Lemma \ref{vec norm form} to reduce $F$ to the form
$$
F = x^2 \widehat a(0) p_x^2 +  b(x,y)p_xp_y + y^2 \widehat c(0) p_y^2.
$$
But according to Lemma \ref{res lemma},  $\widehat a(0) = \widehat c(0)\ne 0$ and we can simultaneously eliminate these coefficients by replacing $F$ with $\frac{1}{\widehat a(0)}F$,  which gives (C3).  \end{proof}

\section{Normal forms}\label{sect4}

We continue working in the light-like coordinate system $(x,y)$ in which
$F = ap_x^2 + bp_xp_y + c p_y^2$,  where $a=a(x)$ and $c=c(y)$ have zeros of order $m$ and $s$ respectively at $(x,y)=(0,0)$.

We start with the case $m=0$, i.e., $a(0)\ne 0$.  In this case, according to Lemma \ref{vec norm form},  we can introduce a new coordinate $x_{\mathrm{new}} = u(x)$  such that in the new coordinates $(x_{\mathrm{new}}, y)$ we have  $a\equiv 1$.  Thus, without loss of generality, we  assume that $F = p_x^2 + bp_xp_y + c p_y^2$.
Taking $c=c(y)\not\equiv 0$ to be an arbitrary function, we want to find a general formula for $\lambda$ and $b$ in terms of the function $c$. 

For $a(x)=1$,  the equation \eqref{main eq}  takes the form
\begin{equation}
\label{eq:mainlambda1}
- \lambda_{xx} + c \lambda_{yy}  + \tfrac{3}{2} c_y \lambda_y  + \tfrac{1}{2}c_{yy}\lambda = 0 .
\end{equation}
Our first goal is to describe all local real analytic solutions to this equation in a neighbourhood of $(x,y)=(0,0)$ under the condition that $c=c(y)$, $c(0)=0$ and $c(y)\ne 0$ for $y\ne 0$.

The line $y=0$ divides the plane $\R^2(x,y)$ into two half-planes.  If $c(y)$ changes sign at $y=0$, then the equation \eqref{eq:mainlambda1} changes its type from hyperbolic to elliptic. For this reason, we treat the half-planes $y>0$ and $y<0$ separately.

First we consider the case when $c(y)$ is positive for $y>0$  (or for $y<0$) and therefore can be written as $c(y) =  f^2(y)$ for a certain real analytic $f(y)$.  We start with a local treatment of the problem.   Consider an arbitrary point $(x,y)$ with $y\ne 0$. In the neighbourhood of such a point, the problem is easy to solve by a suitable coordinate transformation. 

Let us introduce a new auxiliary variable 
\begin{equation}
\label{eq:h}
v= q(y) =  \int \frac{1}{f(y)} \dd y, 
\end{equation}
which is a solution of the equation \begin{equation}
\frac{\dd v}{\dd y}= \frac{1}{f(y)} \quad \mbox{or, equivalently,} \quad  \frac{\dd y}{\dd v} = f(y),  
\end{equation} 

We also introduce a new function
$$
\widetilde\lambda (x,v) = \lambda \bigl(x, y(v)\bigr) f\bigl(y(v)\bigr). 
$$ 

It is easy to check that  (we use the fact that $\frac{\dd y}{\dd v} = f$ and $c=f^2$):
$$
\begin{aligned}
-\widetilde\lambda_{xx} + \widetilde\lambda_{vv} &= -  \lambda_{xx} f 
+ \tfrac{\partial }{\partial v} \left( \lambda_y \tfrac{\dd y}{\dd v}   f  +  \lambda f_y \tfrac{\dd y}{\dd v}  \right)  =  
-  \lambda_{xx} f  +\tfrac{\partial }{\partial v} \left( \lambda_y   f^2  + 
 \tfrac{1}{2}  \lambda  (f^2)_y \right) =   \\
&= -  \lambda_{xx} f  \tfrac{\partial }{\partial v} \left( \lambda_y   c   + 
 \tfrac{1}{2}  \lambda  c_y \right)  =  
  -
  \lambda_{xx} f  +  \left(\lambda_{yy}  c + \lambda_y c_y  + \tfrac{1}{2} \lambda_y c_y + \tfrac{1}{2} \lambda c_{yy} \right) \tfrac{\dd y}{\dd v}   =\\
&=
\left(- \lambda_{xx} + \lambda_{yy}  c  + \tfrac{3}{2} \lambda_y c_y + \tfrac{1}{2} \lambda c_{yy} \right)  f  = 0.
\end{aligned}
$$

The equation 
\begin{equation}
\label{eq:hyper}
-\widetilde\lambda_{xx} + \widetilde\lambda_{vv} = 0
\end{equation}
is easy to solve.  Its general solution has the form
$ \widetilde \lambda(x, v) = \frac{1}{2} \big(\widetilde X(v+x) - \widetilde Y (v-x) \big)$,
where $\widetilde X$ and $\widetilde Y$ are arbitrary analytic functions.  Hence, a general (local) solution of \eqref{eq:mainlambda1}  (for $y\ne 0$) is
$$
 \lambda(x,y) = \frac{1}{2 f(y)} \left( \widetilde X(q(y)+x) - \widetilde Y (q(y)-x)  \right)
 $$

 Let us introduce one additional function $\rho(x,y)$ which is the solution of the equation 
 \begin{equation}
 \label{eq:forrho}
 \frac{\partial \rho}{\partial x} = f(\rho), \quad \rho (0,y) =y.
 \end{equation}
By Cauchy-Kovalevskaya theorem,  $\rho(x,y)$ exists and is real analytic for $y\ne 0$ and $x$ sufficiently small.  Moreover if $c(y)$ does not change sign, the function $f(\rho)$ is analytic in a neighbourhood of $\rho = 0$ and therefore, $\rho(x,y)$ will be analytic in a neighbourhood of $(x,y)=(0,0)$.

\begin{Lemma}\label{lem:1}
Let $X$ and $Y$ be real analytic functions in a neighbourhood of $0$.  Then in the domain $y\ne 0$, the function
\begin{equation}
\label{eq:lambda}
\lambda(x,y) = \frac{1}{2} \, \frac{X \bigl(\rho ( x,y)\bigr) - Y \bigl(\rho(-x, y)\bigr)}{f(y)}
\end{equation}
is a solution of \eqref{eq:mainlambda1}.
\end{Lemma}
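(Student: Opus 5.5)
The plan is to reduce Lemma~\ref{lem:1} to the general solution of the wave equation \eqref{eq:hyper} obtained above. We show that, for $y\ne 0$, the functions $x\mapsto X(\rho(x,y))$ and $x\mapsto Y(\rho(-x,y))$ are exactly of the form $\widetilde X(q(y)+x)$ and $\widetilde Y(q(y)-x)$ for suitable analytic $\widetilde X,\widetilde Y$. Then \eqref{eq:lambda} is a special case of the formula $\lambda = \frac{1}{2f(y)}\bigl(\widetilde X(q(y)+x)-\widetilde Y(q(y)-x)\bigr)$, which was already shown to solve \eqref{eq:mainlambda1} through the substitution $\widetilde\lambda=\lambda f$.

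The key step is the identity $q(\rho(x,y)) = q(y)+x$ for $y\ne 0$. It follows from the defining equation \eqref{eq:forrho}. Differentiating in $x$ gives $\frac{\partial}{\partial x} q(\rho(x,y)) = q'(\rho)\,\rho_x = \frac{1}{f(\rho)}\,f(\rho) = 1$, and at $x=0$ we have $q(\rho(0,y))=q(y)$. Hence $\rho(x,y) = q^{-1}(q(y)+x)$, where $q^{-1}$ is the local inverse of $q$. This inverse exists near any $y\ne 0$ because $q'=1/f\ne 0$ there.

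Replacing $x$ by $-x$ in the same identity gives $\rho(-x,y)=q^{-1}(q(y)-x)$. Setting $\widetilde X := X\circ q^{-1}$ and $\widetilde Y := Y\circ q^{-1}$, both analytic near the relevant values of $v=q(y)$, we obtain $X(\rho(x,y))=\widetilde X(q(y)+x)$ and $Y(\rho(-x,y))=\widetilde Y(q(y)-x)$. Therefore $\widetilde\lambda(x,v)=\lambda f = \frac12\bigl(\widetilde X(v+x)-\widetilde Y(v-x)\bigr)$ satisfies $-\widetilde\lambda_{xx}+\widetilde\lambda_{vv}=0$. The computation preceding the lemma shows that $-\widetilde\lambda_{xx}+\widetilde\lambda_{vv}$ equals $f$ times the left-hand side of \eqref{eq:mainlambda1}, and $f\ne0$ for $y\ne 0$, so \eqref{eq:mainlambda1} holds.

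Alternatively, one could check directly that $\Phi(x,y)=X(\rho(x,y))$ satisfies \eqref{eq:mainlambda1} after division by $f$. This uses $\rho_y = f(\rho)/f(y)$, which follows from the same identity $q(\rho)=q(y)+x$ by differentiating in $y$.

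The only delicate point is domain bookkeeping rather than computation. The composition $X\circ\rho$ only requires $\rho$ to stay in the domain of $X$, which holds for small $x$ and $y$. The verification itself takes place locally near a point with $y\ne0$, where $q$ is a local diffeomorphism. This local argument suffices because the claim is a pointwise PDE identity on $\{y\ne0\}$.
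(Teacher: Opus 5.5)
Your proposal is correct and follows essentially the same route as the paper. Both establish $q(\rho(x,y))=q(y)+x$; you get it by differentiating in $x$, while the paper integrates $\rho_x=f(\rho)$ and fixes the constant using $\rho(0,y)=y$. This makes $\lambda f=\frac12\bigl(\widetilde X(v+x)-\widetilde Y(v-x)\bigr)$ with $\widetilde X=X\circ q^{-1}$ and $\widetilde Y=Y\circ q^{-1}$, which solves \eqref{eq:hyper} and hence \eqref{eq:mainlambda1}. Your remarks on the local invertibility of $q$ near points with $y\ne 0$ only make explicit what the paper leaves implicit.
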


\begin{proof}
Due to the relationship between \eqref{eq:mainlambda1} and \eqref{eq:hyper}, it is sufficient to prove that 
$2\,\widetilde\lambda ( x, v ) = X \bigl(\rho ( x,y(v))\bigr) - Y \bigl(\rho(-x, y(v))\bigr)$ satisfies \eqref{eq:hyper}.

It follows from the definition of $\rho$ and \eqref{eq:h} that
$$
x = \int \frac{1}{f(\rho)} \, \dd \rho + C(y) = q(\rho) + C(y).
$$
Using the initial condition $\rho(0, y) = y$, we get
$0 = q(y) + C(y)$, or equivalently, $C(y) = - q(y)$. Hence, we get the relation
$$
q(\rho) = q(y) + x = v+x.
$$
We can rewrite it as 
$$
\rho \bigl(x,y(v)\bigr) = q^{-1} (v + x)
$$
Thus,  the function 
$2\,\widetilde\lambda ( x, v ) = X \bigl(\rho ( x,y(v))\bigr) - Y \bigl(\rho(-x, y(v))\bigr)$
can be written in the form 
$$
2\,\widetilde\lambda ( x, v ) = X \circ q^{-1} (v + x) - Y \circ q^{-1} (v - x) = \widetilde X (v+x) - \widetilde Y(v-x)
$$
and, therefore, satisfies \eqref{eq:hyper}, which completes the proof. 
\end{proof}

\begin{Remark}\label{rem:1}{\rm
The function $\rho(x,y)$ satisfies one more differential relation which will be used below. It follows from $q(\rho) = q(y) + x$ that 
$q'(\rho) \rho_y = q'(y)$.  Recalling that $q' = \frac{1}{f}$ and $f(\rho) = \rho_x$, we get 
$$
\frac{\rho_y}{\rho_x} = \frac{1}{f(y)} \quad \mbox{or, equivalently,} \quad \rho_x = f(y) \, \rho_y.
$$
}\end{Remark}

It remains to choose $X$ and $Y$ in such a way that $\lambda (x,y)$ has no singularity for $y=0$ and agrees with given initial conditions $\lambda(0,y)=h(y)$ and $\lambda_x(0,y)=H'(y)$, where $h$ and $H$ are real analytic in a neighbourhood of zero.
Let us set
\begin{equation}
\label{eq:ini}
\begin{aligned}
X(\rho) &=  \ \ f(\rho) h(\rho) + H(\rho) \\
Y(\rho) &= - f(\rho) h(\rho) + H(\rho) 
\end{aligned}
\end{equation}

Then for $x=0$ we have
$$
\lambda (0,y) = \frac{X \bigl(\rho ( 0,y)\bigr) - Y \bigl(\rho(0, y)\bigr)}{2 f(y)} =
\frac{X (y) - Y (y)}{2 f(y)} = \frac{f(y) h(y) + f(y) h(y)}{2 f(y)} =  h(y).
$$

Let us compute $\lambda_x (0,y)$.  Since the combination of the terms with $h$ is an even function in $x$, they do not contribute to the final result and we get
$$
\lambda_x (0,y) = \frac{2H'(\rho(0,y))\rho_x(0,y)}{2f(y)} =  \frac{2H'(y) f(y)}{2f(y)}=H'(y),
$$
as required.  

We know from Cauchy-Kovalevskaya theorem that there exists a unique solution of \eqref{eq:mainlambda1} with the initial conditions $\lambda(0,y)=h(y)$ and $\lambda_x(0,y)=H'(y)$. The formulas \eqref{eq:lambda} and  \eqref{eq:ini}
gives such a solution in the domain $\{ y\ne 0\}$.  Hence in this domain, this solution coincides with the one provided by the Cauchy-Kovalevskaya theorem, and therefore, by continuity,  the function $\lambda$ given by \eqref{eq:lambda}, \eqref{eq:ini} extends to the singular line $\{y=0\}$ up to a real analytic solution of \eqref{eq:mainlambda1} in a neighbourhood of $(x,y)=(0,0)$.

In the case,  when $c(y) < 0$  for $y\ne 0$, the situation is similar. The solution of \eqref{eq:mainlambda1} can be found by the same formula,  namely
\begin{equation}
\label{eq:lambdareal}
\begin{aligned}
\lambda(x,y) &= \frac{X \bigl(\rho ( x,y)\bigr) - Y \bigl(\rho(-x, y)\bigr)}{2f(y)}\\
X(\rho) &=  \ \ f(\rho) h(\rho) + H(\rho) \\
Y(\rho) &= - f(\rho) h(\rho) + H(\rho) 
\end{aligned}
\end{equation}
where $f(y) = \sqrt{c(y)}$  and $\rho(x,y)$ solves \eqref{eq:forrho}. But now some of the ingredients of this formula become complex valued.  In particular, the function $f(y)=\ii  \widetilde f(y)$ is pure imaginary, where $\widetilde f^2(y) =-c(y) > 0$.  Hence, the equation \eqref{eq:forrho} takes the form  
$$ 
\frac{\partial \rho}{\partial x} = \ii\, \widetilde f(\rho), \quad \rho (0,y) = y.
$$
and its solution is a complex valued function which can be written as $\rho(x,y) = \widetilde \rho(\ii x, y)$, where $\widetilde\rho(x,y)$ solves the real equation 
\begin{equation}
\label{eq:fortilderho}
\frac{\partial \widetilde \rho}{\partial x} = \widetilde f(\widetilde \rho), \quad \widetilde \rho (0,y) = y
\end{equation}
and, therefore, is real-valued.   In terms of real-valued functions $\widetilde \rho(x,y)$ and $\widetilde f(y)$, we can rewrite \eqref{eq:lambdareal} as follows  
\begin{equation}
\label{eq:lambdacomplex}
\begin{aligned}
\lambda(x, y) &= \frac{X \bigl(\widetilde \rho ( {\mathrm i}x, y)\bigr) - Y \bigl(\widetilde \rho(-{\mathrm i} x, y)\bigr)}{2\,{\mathrm i} \widetilde f(y)}\\
X(\rho) &=  \ \ {\mathrm i} \widetilde f(\rho) h(\rho) + H(\rho) \\
Y(\rho) &= - {\mathrm i} \widetilde f(\rho) h(\rho) + H(\rho) 
\end{aligned}
\end{equation}

We notice that $\lambda(x,y)$ defined by  \eqref{eq:lambdareal} or, equivalently, by \eqref{eq:lambdacomplex}  solves \eqref{eq:mainlambda1} with prescribed initial conditions in the case $c(y)<0$ (elliptic case). This basically follows from the fact that in the real analytic case the passage form the hyperbolic equation \eqref{eq:lambdareal} (with $c(y)>0$) to the elliptic equation \eqref{eq:lambdareal} (with $c(y)<0$) can be done by formal replacement $x$ with $\ii x$.

We also notice that  \eqref{eq:lambdacomplex} can be written in a more elegant form if we introduce complex valued function
$W(v) =  \ii \widetilde f(v) h(v) + H(v)$.  Then \eqref{eq:lambdacomplex} takes the form
\begin{equation}
\label{eq:lambdacomplex2}
\lambda(x,y) = \frac{\operatorname{Im} \Bigl( W\bigl(\widetilde \rho\,( \ii x, y)\bigr)\Bigl)}{\widetilde f (y)},
\end{equation}
where $\widetilde \rho$ is defined by \eqref{eq:fortilderho}.

Next, for a given $\lambda(x,y)=\frac{X \bigl(\rho ( x,y)\bigr) - Y \bigl(\rho(-x, y)\bigr)}{2f(y)}$ we want to reconstruct the function $b(x,y)$ from the PDE system \eqref{poisson br}  (with $a(x)=1$ and $c(y)=f^2(y)$), namely  
$$
\begin{aligned}
(-\lambda b)_x &= 2f f_y\lambda  + 2f^2\lambda_y = 2f (\lambda f)_y = f \Bigl(X \bigl(\rho ( x,y)\bigr) - Y \bigl(\rho(-x, y)\bigr)\Bigr) ;\\
(-\lambda b)_y &=  2\lambda_x;
\end{aligned}
$$
We claim that 
\begin{equation}
\label{eq:forb}
b =   - 2 f(y) \frac{X \bigl(\rho ( x,y)\bigr) + Y \bigl(\rho(-x, y)\bigr)}{X \bigl(\rho ( x,y)\bigr) - Y \bigl(\rho(-x, y)\bigr)},
\end{equation}
or shortly $-\lambda b = X+Y$, $X = X \bigl(\rho ( x,y)\bigr)$ and $Y=Y \bigl(\rho(-x, y)\bigr)$, solves this system.   So we need to show that
$$
\left\{\begin{aligned}
(X+Y)_x &=  f (X-Y)_y \\
(X+Y)_y &= 2 \left(\frac{X-Y}{2f}\right)_x = \frac{1}{f}  (X-Y)_x
\end{aligned}\right.
\quad \mbox{or, equivalently,}\quad \left\{\begin{aligned} X_x &= \ f X_y \\ Y_x &= - f Y_y \end{aligned}\right. .
$$

Since $X_x = X'_\rho \frac{\partial \rho}{\partial x}$ and $X_y = X'_\rho \frac{\partial \rho}{\partial y}$, and similarly 
$Y_x = -Y'_\rho \frac{\partial \rho}{\partial x}$ and $Y_y = Y'_\rho \frac{\partial \rho}{\partial y}$,  this latter system 
is equivalent to
$$
\frac{\partial\rho}{\partial x} = f\,  \frac{\partial\rho}{\partial y},
$$
which is true by Remark \ref{rem:1}.

Formula \eqref{eq:forb}  works for both $c(y)>0$ and $c(y)<0$. However, in the latter case it is easy to see that  \eqref{eq:forb} can be written, in the notation used in \eqref{eq:lambdacomplex2}, as follows
\begin{equation}
\label{eq:forbcomplex}
b(x,y)= - 2 \, \widetilde f(y) \, \frac
{\operatorname{Re}\Bigl( W\bigl(\widetilde \rho\,( \ii x, y)\bigr)\Bigl)}
{\operatorname{Im}\Bigl( W\bigl(\widetilde \rho\,( \ii x, y)\bigr)\Bigl)}
\end{equation}

It is clear that for given $\lambda(x,y)$, $a(x)$ and $c(y)$, the function $b$ is defined up to adding an expression of the form $ \frac{C}{\lambda}$ with constant $C\in\R$.  This freedom can be controlled by a suitable choice of functions $X$ and $Y$.  Indeed, if in \eqref{eq:lambdareal} we change $H$ by $H-\frac{C}{4}$, then $\lambda(x,y)$ remains unchanged but $b=-\frac{2}{\lambda} (X+Y)$  becomes $b_{\mathrm{new}}=b+ \frac{C}{\lambda}$.  Thus,  we don't miss any solutions.

We summarise the above arguments in the following conclusion.

\begin{Proposition}
\label{prop:2}  Let $c(y)$ be a real analytic function and 
$F=p_x^2 + b(x,y)p_xp_y + c(y) p_y^2$ be an integral of the geodesic flow of the metric $g=2\lambda(x,y)\dd x\dd y$ {\rm(}in a neighbourhood of $\mathsf p =(0,0)${\rm)}.  Assume that $\lambda(0,y) = h(y)$, $\lambda_x(0,y) = H'(y)$ and $b(0,0)=-4\frac{H(0)}{h(0)}$, where $h$ and $H$ are arbitrary real analytic functions defined in a neighbourhood of zero, $h(0)\ne 0$. Then locally $\lambda(x,y)$ and $b(x,y)$ are defined by \eqref{eq:lambdareal} and \eqref{eq:forb} respectively.
These formulas define $\lambda$ and $b$ for $y\ne 0$ and can be extended to the line $y=0$ by continuity up to real analytic functions in a neighbourhood of $\mathsf p=(0,0)$.    

If $c(y)<0$, these formulas can be equivalently written in the form \eqref{eq:lambdacomplex2} and \eqref{eq:forbcomplex}.

\end{Proposition}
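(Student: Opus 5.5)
The plan is to reverse the construction above: start from an arbitrary pair $(\lambda,b)$ as in the statement and show that it must coincide with the pair produced by \eqref{eq:lambdareal} and \eqref{eq:forb}. The tool is uniqueness in the Cauchy--Kovalevskaya theorem applied to \eqref{eq:mainlambda1}, whose characteristic coefficient in $x$ is $-1\ne 0$. Consequently, the line $\{x=0\}$ is non-characteristic everywhere, including at $y=0$.

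First, fix $\lambda$ and $b$. Define $h(y)=\lambda(0,y)$ and let $H$ be a primitive of $\lambda_x(0,y)$. Since $\lambda$ solves \eqref{eq:mainlambda1} (by the Lemma, with $a=1$), it is the unique analytic solution with Cauchy data $(h,H')$ on $\{x=0\}$. Next, build $X,Y$ from \eqref{eq:ini}, with $f=\sqrt{c}$ taken real or purely imaginary as appropriate. By Lemma \ref{lem:1} and the computation of $\lambda(0,y)$ and $\lambda_x(0,y)$ carried out above, the function \eqref{eq:lambdareal} solves \eqref{eq:mainlambda1} on $\{y\ne0\}$ with the same Cauchy data. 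Uniqueness then gives equality with $\lambda$ on each half-plane, and the right-hand side therefore extends analytically across $y=0$. In the elliptic case I would check that \eqref{eq:lambdareal} is real, which is exactly the rewriting \eqref{eq:lambdacomplex2}. This uses that $\rho(x,y)=\widetilde\rho(\ii x,y)$ together with the fact that conjugation swaps the $X$ and $Y$ terms.

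For $b$, the system \eqref{poisson br} with $a=1$ determines $\lambda b$ up to an additive constant: its gradient $\bigl((\lambda b)_x,(\lambda b)_y\bigr)$ is prescribed by $\lambda$. The verification above shows that $-(X+Y)$ has the same gradient on $\{y\ne0\}$, so $\lambda b=-(X+Y)+C$. I would fix the constant by evaluating at the origin. As $y\to0$ along $x=0$ we have $X+Y\to 2H(0)$, which should give $C=0$ exactly under the normalization $b(0,0)=-4H(0)/h(0)$. I expect the factor to need a recheck against $-\lambda b = X+Y$: with $\lambda(0,0)=h(0)$ that relation gives $b(0,0)=-2H(0)/h(0)$. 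The constant in the statement may reflect a factor convention, and in any case the shift $H\mapsto H-C/4$ discussed above absorbs any discrepancy. Dividing by $\lambda\ne0$ then gives \eqref{eq:forb}, and in the elliptic case this is \eqref{eq:forbcomplex}. Since $\lambda b$ is analytic, $b$ extends across $y=0$ as well.

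The main obstacle is justifying that the formulas make sense and are analytic near $y=0$ when $c$ changes sign or vanishes to odd order. In that situation $f$, and hence $\rho$, is not analytic at $y=0$. I would handle this through uniqueness rather than directly. The formula is only claimed on each component of $\{y\ne0\}$, where $\rho$ exists by \eqref{eq:forrho} for small $x$. The analytic continuation across $y=0$ is supplied by the Cauchy--Kovalevskaya solution itself. One must ensure that $\rho(\pm x,y)$ is defined on a neighbourhood that is uniform enough, shrinking $|x|$ relative to $|y|$ if necessary. Agreement on an open set suffices for identity of analytic functions, so no uniform domain is truly needed.
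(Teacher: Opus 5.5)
Your proposal is correct and follows essentially the same route as the paper: Cauchy--Kovalevskaya uniqueness for \eqref{eq:mainlambda1} with Cauchy data on the non-characteristic line $\{x=0\}$, Lemma \ref{lem:1} with $X,Y$ built from \eqref{eq:ini}, and recovery of $\lambda b$ up to an additive constant, which is absorbed by the choice of the primitive $H$. Your recheck of the normalization is also right: $-\lambda b = X+Y$ forces $b(0,0)=-2H(0)/h(0)$ (for example, $h\equiv H\equiv 1$ and $c=y^2$ give $b(0,0)=-2$), so the $-4$ in the statement is a slip inherited from the paper's inconsistent formula $b=-\frac{2}{\lambda}(X+Y)$, and the matching shift should be $H\mapsto H-\frac{C}{2}$ rather than $H\mapsto H-\frac{C}{4}$.
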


\begin{Remark} \label{rem:3}{\rm
The statement of Proposition \ref{prop:2} makes sense for $g$ which becomes degenerate at the point $(0,0)$, i.e. if $\lambda(0,0)=h(0)=0$.  The condition $h(0)\ne 0$ is important only to make sure that $g$ is non-degenerate, but is not used in the proof. Of course, in this case the function $b$ does not have to be analytic at those points where $\lambda(x,y)=0$, but $b \lambda$ is still analytic by construction.  In particular,  if we assume that $\lambda(0,y) = h(y) \equiv 0$ and $\lambda_x(0,y) = H'(y)$ with $H'(0)\ne 0$, then $\lambda$ and $b$ will be necessarily of the form     
\begin{equation}
        \label{eq:C??} 
        \begin{aligned}    
        \lambda &= \frac{X \bigl(\rho ( x,y)\bigr) - X \bigl(\rho(-x, y)\bigr)}{f(y)} \\ 
        b &= -2 f(y) \frac{X\bigl(\rho ( x,y)\bigr) + X\bigl(\rho(-x, y)\bigr)}{X\bigl(\rho ( x,y)\bigr) - X\bigl(\rho(-x, y)\bigr)}                \end{aligned}
        \end{equation} 
where $X(\rho)= H(\rho)$.
}\end{Remark}

We are now ready to describe normal forms for $g$ and $F$ in the $\gl$-regular case, i.e., when the eigenvalues of $L$ collide at the singular point $\mathsf p$ in such a way that $L(\mathsf p)$ becomes a Jordan block  (case B in Proposition \ref{int norm forms}).

\begin{Theorem} \label{thm2}
Let $g$ be a two-dimensional pseudo-Riemannian metric,  $F$ a non-trivial quadratic integral of its geodesic flow and $L$ be the operator defined by \eqref{operator}.  Suppose that at a singular point $\mathsf p$,  the operator $L(\mathsf p)$ is $\operatorname{gl}$-regular  \footnote{This condition means that generically the eigenvalues of $L$ are distinct, but at the point $\mathsf p$ they collide and $L(\mathsf p)$ becomes similar to a Jordan $2\times 2$ block.}. Then  $g$ and $F$ {\rm(}perhaps,  up to rescaling $F\mapsto \operatorname{const}\cdot F${\rm)}  can be brought to one of the following normal forms:

    \begin{itemize}
    \item[{\rm (B1.1)}] $m=0$, $s=1$\,{\rm :}
    \begin{equation}\label{eq:B1.1}   
\begin{aligned}    
g &= \frac{X\left(\tfrac{x}{2} + \sqrt{y}\right) - X\left(\tfrac{x}{2} - \sqrt{y}\right)}{\sqrt{y}} \, \dd x \dd y, \\
F &= p^2_x + yp^2_y -2\sqrt{y} \, \frac{X\left(\tfrac{x}{2} + \sqrt{y}\right) + X\left(\tfrac{x}{2} - \sqrt{y}\right)}{X\left(\tfrac{x}{2} + \sqrt{y}\right) - X\left(\tfrac{x}{2} - \sqrt{y}\right)}\, p_xp_y,
\end{aligned}
\end{equation}
    where $X(t)$ is an arbitrary analytic function in a neighbourhood of zero such that $X'(0) \neq 0$.

    \item[{\rm (B1.2)}]  $m=0$, $s=2$\,{\rm :}
\begin{equation}\label{eq:B1.2}   
\begin{aligned} 
g &=  \frac{X(ye^x) - Y(ye^{-x})}{y} \, \dd x \dd y, \\
       F &= p_x^2 + y^2 p_y^2  -2\, y \, \frac{X(ye^x) + Y(ye^{-x})}{X(ye^x) - Y(ye^{-x})},
\end{aligned}
\end{equation}
      where $X(v)=vh(v)+H(v)$ and $Y(v)=-vh(v)+ H(v)$.
      
        \item[{\rm (B1.3)}] $m=0$, $s>2$\,{\rm :}       
\begin{equation}\label{eq:B1.3}
\begin{aligned}
g &=  \frac{X\left(y\bigl(1 + x y^{\frac{s - 2}{2}}\bigr)^{\frac{2}{2 - s}}\right) - Y\left(y\bigl(1 - x y^{\frac{s - 2}{2}}\bigr)^{\frac{2}{2 - s}}\right)}{ \frac{2}{2-s} \, y^{\frac{s}{2}}} \, \dd x \dd y \\
F &= p^2_x + \left(\tfrac{2}{2 - s}\right)^2 y^s p^2_y - \tfrac{4}{2 - s}\,y^{\frac{s}{2}} \frac{
X\left(y\bigl(1 + x y^{\frac{s - 2}{2}}\bigr)^{\frac{2}{2 - s}}\right) + 
Y\left(y\bigl(1 - x y^{\frac{s - 2}{2}}\bigr)^{\frac{2}{2 - s}}\right)}{
X\left(y\bigl(1 + x y^{\frac{s - 2}{2}}\bigr)^{\frac{2}{2 - s}}\right) - 
Y\left(y\bigl(1 - x y^{\frac{s - 2}{2}}\bigr)^{\frac{2}{2 - s}}\right)}\, p_xp_y,
\end{aligned}
\end{equation}
where $X(v) = \frac{2}{2-s} v^{\frac{s}{2}}h(v) + H(v)$, $Y(v) = - \frac{2}{2-s}v^{\frac{s}{2}} h(v) + H(v)$.

        \item[{\rm (B2)}] $m=0$, $s=2k$, $k \geq 2$\,{\rm :}
\begin{equation}\label{eq:B2}
\begin{aligned}
g &= \frac{1 + y^{k-1}}{y^k} \,\Big(X\bigl(\rho(x,y)\bigr) -  Y\bigl(\rho(-x,y)\bigr)\Big) \,\dd x \dd y,\\
F &= p^2_x + \frac{y^{2k}}{(1 + y^{k-1})^2} \, p^2_x - 2 \, \frac{y^k}{1 + y^{k-1}} \,\frac{X\bigl(\rho(x,y)\bigr) +  Y\bigl(\rho(-x,y)\bigr)}{X\bigl(\rho(x,y)\bigr) -  Y\bigl(\rho(-x,y)\bigr)}\, p_xp_y,
\end{aligned}
\end{equation}
where $X(v) = \frac{v^k}{1 + v^{k-1}}h(v) + H(v)$, $Y(v) = -\frac{v^k}{1 + v^{k-1}}h(v) + H(v)$, and   $\rho(x,y)$ is the solution of \eqref{eq:forrho} with $f(y)=\frac{y^k}{1 + y^{k-1}}$.
    \end{itemize}

    \begin{itemize}
         \item[{\rm (B3.1)}]  $m=0$, $s=2$\,{\rm :}
\begin{equation}\label{eq:B3.1}
\begin{aligned}
g &=  2\frac{ \operatorname{Im}\Big( W(ye^{\ii x})\Big)}{y} \, \dd x \dd y, \\
F &= -p^2_x + y^2\, p^2_y + 2\, y\,  \frac{\operatorname{Re}\Big( W(ye^{\ii x})\Big)}{\operatorname{Im}\Big( W(ye^{\ii x})\Big)} \, p_xp_y.
\end{aligned}
\end{equation}
where $W(v) = \ii v h(v) + H(v)$.

         \item[{\rm (B3.2)}]  $m=0$, $s=2k$,  $k \geq 2$\,{\rm :}
\begin{equation}\label{eq:B3.2}
\begin{aligned}
g &=  2\frac{ \operatorname{Im} \Big( W\bigl(y(1 + \ii x y^{k-1})^{\frac{1}{1-k}}\bigr)\Big)}{\frac{1}{1-k} \, y^k} \dd x \dd y,\\
F& = -p^2_x + \left(\frac{y^{k}}{1-k}\right)^2  p^2_y + 2 \, \frac{y^k}{1-k} \, \frac{\operatorname{Re}
\Big( W\bigl(y(1 + \ii x y^{k-1})^{\frac{1}{1-k}}\bigr)\Big)}{\operatorname{Im} 
\Big( W\bigl(y(1 + \ii x y^{k-1})^{\frac{1}{1-k}}\bigr)\Big)} p_xp_y,
\end{aligned}
\end{equation}
where $W(v) =  \ii \,  \frac{v^k}{1-k} \, h(v) + H(v)$.

     \item[{\rm (B4)}]  $m=0$, $s=2k$, $k\ge 2$\,{\rm :}

\begin{equation}\label{eq:B4}
\begin{aligned}
g &=   2 \, \frac{1 + y^{k-1}}{y^k} \operatorname{Im}\Big( W\bigl(\rho(\ii x,y)\bigr)\Big) \dd x \dd y,\\
F &= -p^2_x + \frac{y^{2k}}{(1 + y^{k-1})^2}\, p^2_y + 2 \, \frac{y^k}{1 + y^{k-1}} \, \frac{\operatorname{Re}\Big( W\bigl(\rho(\ii x,y)\bigr)\Big)}{\operatorname{Im}\Big( W\bigl(\rho(\ii x,y)\bigr)\Big)} \, p_xp_y.
\end{aligned}
\end{equation}
where $W(v) =  \ii \, \frac{v^k}{1 + v^{k-1}} \, h(v) + H(v) $,  and  $\rho(x,y)$ is the solution of \eqref{eq:forrho} with $f(y)=\frac{y^k}{1 + y^{k-1}}$.

\end{itemize}

In all the cases,  $h$ and $H$ are arbitrary real analytic functions in a neighbourhood of zero, and $h(0) \neq 0$,  which can be understood as initial conditions for $g=2\lambda \dd x\dd y$, namely, $\lambda(0,y)=h(y)$ and $\lambda_x(0,y)=H'(y)$.
\end{Theorem}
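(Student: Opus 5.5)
The plan is to reduce everything to Proposition \ref{int norm forms}, case (B), and then apply Proposition \ref{prop:2}, which already gives $\lambda$ and $b$ in terms of $c(y)$ and the initial data $h,H$. By Proposition \ref{int norm forms}, in suitable light-like coordinates we have $F=\pm p_x^2+b\,p_xp_y+c(y)p_y^2$ with $c$ one of $y^s$, $\frac{y^{2k}}{(1+y^{k-1})^2}$ (in case (B3) I will allow a constant factor $(\frac{1}{1-k})^2$, or $(\frac{2}{2-s})^2$ in (B1.3), which can be absorbed by rescaling $y$). If the sign in front of $p_x^2$ is $-$, I will multiply $F$ by $-1$, which turns it into $p_x^2-b\,p_xp_y-c\,p_y^2$. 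This is the elliptic situation $c<0$ of Proposition \ref{prop:2}, and explains the sign change in front of the mixed term in (B3), (B4). So in every case it remains to compute, for the specific $f=\sqrt{c}$, the auxiliary functions $q(y)=\int dy/f$ and $\rho(x,y)=q^{-1}(q(y)+x)$ of \eqref{eq:forrho}. These are then substituted into \eqref{eq:lambdareal}, \eqref{eq:forb}, or into \eqref{eq:lambdacomplex2}, \eqref{eq:forbcomplex}.

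The explicit computations proceed case by case. For $s=2$, $f=y$, I get $q=\ln y$ and $\rho=ye^{x}$, which gives (B1.2). In the elliptic version $\widetilde f=y$ and $\widetilde\rho(\ii x,y)=ye^{\ii x}$, which gives (B3.1) with $W(v)=\ii vh(v)+H(v)$. For $s>2$ I take $f=\frac{2}{2-s}y^{s/2}$ (after the rescaling of $y$ mentioned above), so that $q=y^{(2-s)/2}$ and $\rho=y(1+xy^{(s-2)/2})^{2/(2-s)}$, which gives (B1.3). With $s=2k$ and $f=\frac{y^k}{1-k}$ the same computation with $x\mapsto \ii x$ gives (B3.2). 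For (B2) and (B4) no closed form of $\rho$ is needed, since the statement keeps $\rho$ as the solution of \eqref{eq:forrho}. Throughout, $X,Y$ (or $W$) are defined by \eqref{eq:ini}, and $g=2\lambda\,\dd x\dd y$ by the factor-$2$ convention.

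The odd case $s=1$, i.e.\ (B1.1), is the delicate one, because $f=\sqrt y$ is not analytic and $c$ changes sign across $y=0$. Here I will apply the formulas for $y>0$: $q=2\sqrt y$ and $\rho=(\sqrt y+x/2)^2$. I then want to reparametrize by $t=\sqrt\rho$, i.e.\ write $X(\rho)=\widehat X(\sqrt\rho)$, so that $X(\rho(x,y))=\widehat X(\frac x2+\sqrt y)$. Likewise $Y(\rho(-x,y))=\widehat Y(\frac x2-\sqrt y)$ up to the sign convention for the branch of the root. Since $X(\rho)=\sqrt\rho\,h(\rho)+H(\rho)$ and $Y(\rho)=-\sqrt\rho\,h(\rho)+H(\rho)$, both are the same analytic function $\widehat X(t)=t\,h(t^2)+H(t^2)$, evaluated at $t=\pm\sqrt\rho$. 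This is exactly why a single function appears in (B1.1), and an arbitrary analytic $\widehat X(t)$ (odd part $\leftrightarrow h$, even part $\leftrightarrow H$) is equivalent to the pair $(h,H)$. The condition $h(0)\ne0$ becomes $\widehat X'(0)\ne0$. The resulting expressions for $\lambda$ and $b$ are odd/even in $\sqrt y$ in the right way, so they are analytic in $y$ and give the formulas on both sides of $y=0$. This matches Proposition \ref{prop:2}'s extension by continuity, since the elliptic side corresponds to $\sqrt y$ imaginary.

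The main obstacle I expect is bookkeeping rather than ideas. The work is in matching the precise constants: the factor of $2$ in $g$, the signs of the $p_xp_y$ coefficient after $F\mapsto -F$, and the choice of the square-root branches. I also need to verify that the extension across $y=0$ is analytic in the $s=1$ and odd-$s$ situations, where I will rely on the uniqueness argument via Cauchy–Kovalevskaya already given before Proposition \ref{prop:2}. Finally, I have to check that no solutions are lost, which follows from the remark on the freedom $b\mapsto b+C/\lambda$ being absorbed by $H\mapsto H-C/4$.
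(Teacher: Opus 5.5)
Your proposal is correct and follows the paper's own proof. The paper reduces to case (B) of Proposition~\ref{int norm forms} and applies Proposition~\ref{prop:2} with the explicit $f$ and $\rho$ in each subcase, including the reparametrization $t=\sqrt{\rho}$ that merges $X$ and $Y$ into the single function $t\,h(t^2)+H(t^2)$ in (B1.1); it obtains (B3) and (B4) by passing to $-F$ and the elliptic formulas, exactly as you do. Your choice $f=\frac{2}{2-s}y^{s/2}$, $\rho=y\bigl(1+xy^{\frac{s-2}{2}}\bigr)^{\frac{2}{2-s}}$ in (B1.3) is the one consistent with the stated normal form, and your remarks on the Cauchy--Kovalevskaya extension across $y=0$ and on the freedom $b\mapsto b+C/\lambda$ spell out what the paper uses without comment.
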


\begin{proof}
The proof follows directly from Propositions \ref{int norm forms} and \ref{prop:2}.  We only need to give some additional comments in each case.

In terms of Proposition \ref{int norm forms},  we consider the case B.  Then according to this proposition, the coordinates $x$ and $y$ can be chosen in such a way $F=\pm p_x^2 + b(x,y)p_xp_y + c(y)p_y^2$ with some explicitly given function $c(y)$. 
In this setting,  $\lambda$ and $b$ can be reconstructed by Proposition \ref{prop:2}. We only need to describe the corresponding functions $f(y)$ and $\rho(x,y)$.

Some of the cases from Proposition \ref{int norm forms} will be divided into subcases.  Here is the list of all essentially different (sub)cases. 

\begin{itemize}
\item [{\rm (B.1.1)}] $m=0$, $s=1$\,{\rm :}  
$\varepsilon = 1$,  $c(y) = y$. Then $f(y)=\sqrt{y}$, $\rho(x,y)=\left(\tfrac{x}{2} + \sqrt{y}\right)^2$.    If we denote $t=\tfrac{x}{2} + \sqrt{y}$, then  $X(\rho) = f(\rho)h(\rho) + H(\rho) = t \, h(t^2) + H(t^2)=\widetilde X(t)$ can be understood as an arbitrary analytic function of $t$.  Then $Y(\rho) = - f(\rho)h(\rho) + H(\rho) = - t \, h(t^2) + H(t^2)=\widetilde X(-t)$.  Hence, $X\bigl(\rho(x,y)\bigr)=\widetilde X \left(\tfrac{x}{2} + \sqrt{y}\right)$ and $Y\bigl(\rho(-x,y)\bigr)=\widetilde X \left(\tfrac{x}{2} - \sqrt{y}\right)$ and  \eqref{eq:B1.1} follows from general formulas \eqref{eq:lambdareal} and \eqref{eq:forb} in Proposition \ref{prop:2}  (where we replace $\widetilde X(\cdot)$ by $X(\cdot)$ to simplify the final notation).  

\item [{\rm (B.1.2)}] $m=0$, $s=2$\,{\rm :} $\varepsilon = 1$, $c(y) = y^2$. Then  $f(y) = y$, $\rho(x,y) = ye^x$, and \eqref{eq:B1.2} follows from general formulas \eqref{eq:lambdareal} and \eqref{eq:forb} in Proposition \ref{prop:2}.

\item [{\rm (B.1.3)}] $m=0$, $s>2$\,{\rm :} 
$\varepsilon = 1$, $c(y)= \left( \frac{2}{2-s}\right)^2  y^s$  (here we rescale $c(y)$ in formula (B2) from Proposition \ref{int norm forms} to simplify coefficients in the final formula \eqref{eq:B1.3}).  Then  $f(y) = \left( \frac{2}{2-s}\right)^2  y^{\frac{s}{2}}$, $\rho(x,y) = y \left(  1 + x y^{\frac{s-2}{2}}  \right)^{\frac{2}{s-2}}$ and \eqref{eq:B1.3} follows from general formulas \eqref{eq:lambdareal} and \eqref{eq:forb} in Proposition \ref{prop:2}.

\item [{\rm (B.2)}] $m=0$, $s=2k$, $k\ge 2$\,{\rm :} $\varepsilon = 1$,
$c(y)= \frac{y^{2k}}{(1 + y^{k-1})^2}$. Then $f(y) =  \frac{y^{k}}{1 + y^{k-1}}$. The solution of  \eqref{eq:forrho} cannot be expressed in terms of elementary functions, and \eqref{eq:B2} is left in the general form from Proposition \ref{prop:2}, where $\rho$ is the solution of \eqref{eq:forrho} with $f(y) =  \frac{y^{k}}{1 + y^{k-1}}$. 
\end{itemize}

The remaining cases (B.3.1), (B.3.2) and (B.4) are {\it complex} analogs of (B.1.2),  (B.1.3) with $s=2k$, and (B.2) respectively.  To formally apply Proposition  \ref{prop:2}, we can change the sign of $F$, which will lead to the change of the sign of $c(y)$.   As explained above, this will result in changing $\rho(x,y)$ with $\rho(\ii x, y)$ and $f(\cdot)$ with $\ii f(\cdot)$,  where $\rho$ and $f$ are the {\it old}  functions from (B.1.2),  (B.1.3) and (B.2).  After this operation, we change the sign of $F$ again to get the final normal forms 
\eqref{eq:B3.1},  \eqref{eq:B3.2} and  \eqref{eq:B4}. \end{proof}

Finally we describe the normal forms for case $C$ from Proposition \ref{int norm forms}.

\begin{Theorem}\label{thm3}
Let $g$ be a two-dimensional pseudo-Riemannian metric,  $F$ a non-trivial quadratic integral of its geodesic flow and $L$ be the operator defined by \eqref{operator}. Suppose that at a singular point $\mathsf p$, the operator  $L(\mathsf p)$ is  scalar, i.e., $L(\mathsf p) = \lambda_0\,\Id$. Then $g$ and $F$ (perhaps up to rescaling $F \mapsto \mathrm{const}\cdot F$) can be brought to one of the following normal forms:
    \begin{itemize}
        \item[{\rm (C1.1)}]   $m=1$, $s=1$:   
        \begin{equation}
        \label{eq:C1.1} 
        \begin{aligned}                             
        g &= \frac{X\left(\sqrt{x} + \sqrt{y}\right) - X\left(\sqrt{x} - \sqrt{y}\right)}{\sqrt{x}\sqrt{y}}\, \dd x \dd y, \\
         F &= x\, p^2_x  + y\, p^2_y - 2\sqrt{x}\sqrt{y} \, \frac{X\left(\sqrt{x} + \sqrt{y}\right) + X\left(\sqrt{x} - \sqrt{y}\right)}{X\left(\sqrt{x} + \sqrt{y}\right) - X\left(\sqrt{x} - \sqrt{y}\right)}\,p_xp_y, 
        \end{aligned}
        \end{equation} 
  where      $X(t)$ is an even analytic function near zero  such that $X''(0) \neq 0$;
        
        \item[{\rm (C1.2)}]   $m=1$, $s=2$:
        
        \begin{equation}
        \label{eq:C1.2} 
        \begin{aligned}    
        g &= \frac{X\left(y e^{2\sqrt{x}}\right) - X\left(ye^{-2\sqrt{x}}\right)}{y\sqrt{x}}\, \dd x \dd y, \\ 
        F &= x\, p^2_x + y^2\, p^2_y -2y\sqrt{x} \, \frac{X\left(y e^{2\sqrt{x}}\right) + X\left(ye^{-2\sqrt{x}}\right)}{X\left(y e^{2\sqrt{x}}\right) - X\left(ye^{-2\sqrt{x}}\right)}\, p_xp_y , 
                \end{aligned}
        \end{equation} 
where $X(t)$ is an analytic function near zero, and $X'(0) \neq 0$;

        \item[{\rm (C1.3)}]  $m=1$, $s>2$:
        \begin{equation}
        \label{eq:C1.3} 
        \begin{aligned}            
        g &= \frac{
        X\left(y\bigl(1 +  2\sqrt{x}y^{\frac{s-2}{2}}\bigr)^{\frac{2}{2-s}}\right) - 
        X\left(y\bigl(1 -  2\sqrt{x}y^{\frac{s-2}{2}}\bigr)^{\frac{2}{2-s}}\right)}
        { \frac{2}{2-s}\sqrt{x} \, y^{\frac{s}{2}}} \, \dd x \dd y, \\ 
        F &= xp^2_x 
        + \left(\tfrac{2}{2-s}\right)^2y^sp^2_y
        - \tfrac{4\sqrt{x}}{2-s} \, y^{\frac{s}{2}} \frac{
        X\left(y \bigl( 1 {+}  2\sqrt{x}y^{\frac{s-2}{2}}\bigr)^{\frac{2}{2-s}}\right) + 
        X\left( y\bigl(1 {-}  2\sqrt{x}y^{\frac{s-2}{2}}\bigr)^{\frac{2}{2-s}}\right)}
        {X\left(y\bigl(1 {+}  2\sqrt{x}y^{\frac{s-2}{2}}\bigr)^{\frac{2}{2-s}}\right) - 
        X\left(y\bigl(1 {-}  2\sqrt{x}y^{\frac{s-2}{2}}\bigr)^{\frac{2}{2-s}}\right)}p_xp_y , 
        \end{aligned}
        \end{equation} 
 where $X(t)$ is an analytic function near zero, $X'(0) \neq 0$, $s > 2$;

        \item[{\rm (C2)}]  $m=1$, $s=2k$, $k\ge 2$:
        \begin{equation}
        \label{eq:C2} 
        \begin{aligned}    
        g &= \frac{1 + y^{k-1}}{\sqrt{x} \, y^k} 
        \Big( 
        X\left(\rho\left(2\sqrt{x}, y\right)\right) - 
        X\left(\rho\left(-2\sqrt{x}, y\right)\right) \Big) \, \dd x \dd y, \\
        F &= xp^2_x - 2 \sqrt{x} \frac{y^k}{1 + y^{k-1}} 
        \frac{
        X\left(\rho\left(2\sqrt{x}, y\right)\right) + 
        X\left(\rho\left(-2\sqrt{x}, y\right)\right)}
        {X\left(\rho\left(2\sqrt{x}, y\right)\right) - 
        X\left(\rho\left(-2\sqrt{x}, y\right)\right)} \, p_xp_y + \frac{y^{2k}}{(1 + y^{k-1})^2} \, p^2_y, 
        \end{aligned}
        \end{equation} 
 where  $X(t)$ is an analytic function near zero, $X'(0) \neq 0$ and and  $\rho(x,y)$ is the solution of \eqref{eq:forrho} with $f(y)=\frac{y^k}{1 + y^{k-1}}$;

        \item[{\rm (C3)}]  $m=2$, $s=2$:
        \begin{equation}
        \label{eq:C3} 
        g = h(xy)\dd x \dd y, \quad F = (xp_x - yp_y)^2 + \operatorname{const} \frac{p_xp_y}{h(xy)}, 
        \end{equation}
        $h(t)$ is an analytic function near zero, and $h(0) \neq 0$.
        
           \item[{\rm (C4)}]  $D\equiv 0$, $a(x)\equiv 0$,  $c(0)=0$, $c'(0)\ne 0$:         
           \begin{equation}
        \label{eq:C4} 
   g = 2\dd x \dd y, \quad F = yp^2_y - (x+C)p_xp_y, \quad C\in\R.
    \end{equation}
    \end{itemize}
    
    In all the cases, the singular point $\mathsf p$ has coordinates $(x,y)=(0,0)$. 
\end{Theorem}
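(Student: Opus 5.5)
The plan is to start from the pre-normal forms (C1)--(C4) of Proposition~\ref{int norm forms} and, in each case, solve the system \eqref{poisson br} together with \eqref{main eq} for $\lambda$ and $b$. In cases (C1) and (C2) I would reduce to the $\gl$-regular machinery of Proposition~\ref{prop:2} by a square-root change of the $x$-variable. In cases (C3) and (C4) \eqref{main eq} becomes simple enough to solve directly by power series.

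\emph{Cases (C1), (C2), $m=1$.} Put $x=t^2/4$, i.e.\ $t=2\sqrt{x}$, for $x>0$. Then $p_t=\tfrac t2 p_x$, so $x\,p_x^2=p_t^2$ and $b\,p_xp_y=\tfrac{t}{2}\,b\,p_tp_y\cdot\tfrac{2}{t}\cdot\tfrac{t}{2}$; concretely, in the coordinates $(t,y)$ the pair takes the form $F=p_t^2+\widetilde b\,p_tp_y+c(y)p_y^2$ with $\widetilde b=b/\sqrt{x}$. Likewise $g=2\widetilde\lambda\,\dd t\,\dd y$ with $\widetilde\lambda(t,y)=\tfrac t2\,\lambda(t^2/4,y)$.
\begin{itemize}
\item Since $\lambda$ is analytic in $x$ with $\lambda(0,0)\ne0$, the function $\widetilde\lambda$ is analytic, odd in $t$, and satisfies $\widetilde\lambda(0,y)\equiv0$ and $\widetilde\lambda_t(0,0)\ne0$.
\item This is exactly the degenerate situation of Remark~\ref{rem:3}. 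Hence $\widetilde\lambda=\bigl(X(\rho(t,y))-X(\rho(-t,y))\bigr)/f(y)$, with $\widetilde b$ given by \eqref{eq:C??} and $X=H$.
\item Conversely, any such $\widetilde\lambda$ is odd in $t$, so $\lambda=2\widetilde\lambda/t$ is an even analytic function of $t$, i.e.\ an analytic function of $x$. The same holds for $b=\sqrt{x}\,\widetilde b$. This also covers $x<0$ by analytic continuation, which amounts to the replacement $t\mapsto \ii t$.
\end{itemize}
Substituting the explicit $f$ and $\rho$ from the proof of Theorem~\ref{thm2} gives the listed formulas:
\begin{itemize}
\item $f=\sqrt y$, $\rho=(\sqrt x+\sqrt y)^2$ for (C1.1). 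Here $X(\rho)$ becomes an even function of $\sqrt x+\sqrt y$, and $\lambda(0,0)\ne0$ translates into $X''(0)\ne0$.
\item $f=y$, $\rho=ye^{t}$ for (C1.2).
\item The power-type $f$ and $\rho$ for (C1.3).
\item The implicit $\rho$ for (C2).
\end{itemize}
In (C1.2), (C1.3) and (C2) the nondegeneracy $\lambda(0,0)\ne0$ becomes $X'(0)\ne0$.

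\emph{Case (C3).} Here $a=x^2$, $c=y^2$. In terms of the Euler operators $\theta_x=x\partial_x$, $\theta_y=y\partial_y$, equation \eqref{main eq} reads
$$
2(\theta_y-\theta_x)(\theta_x+\theta_y+2)\lambda=0 .
$$
On a monomial $x^iy^j$ this gives $(j-i)(i+j+2)=0$, so only diagonal monomials survive and $\lambda=\tfrac12 h(xy)$. The last two equations of \eqref{poisson br} then determine $\lambda b$ up to an additive constant. The particular solution is $\lambda b=-2xy\lambda$ (to be checked directly), so $b=-2xy+\mathrm{const}/\lambda$. This yields $F=(xp_x-yp_y)^2+\mathrm{const}\,p_xp_y/h(xy)$.

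\emph{Case (C4).} Here $a\equiv0$ and, after Lemma~\ref{vec norm form}(a4), $c=y$. Equation \eqref{main eq} becomes $2y\lambda_{yy}+3\lambda_y=0$. Writing $\lambda=\sum_j\lambda_j(x)y^j$, the coefficients satisfy $j(2j+1)\lambda_j=0$, so $\lambda=\lambda(x)$. Then \eqref{poisson br} gives $(\lambda b)_y=0$ and $(\lambda b)_x=-\lambda$. The light-like change $x_{\mathrm{new}}=\int\lambda\,\dd x$ preserves the term $yp_y^2$, makes $\lambda\equiv1$, and leaves $b=-(x+C)$. This also shows that $\tfrac12\tr L$ along $\{y=0\}$ has nonzero derivative, as promised in the Remark after Theorem~\ref{thm1}.

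The main obstacle I expect is the (C1)/(C2) step. One has to justify rigorously that the correspondence $\lambda\leftrightarrow\widetilde\lambda$ is a bijection between analytic solutions in $x$ and solutions odd in $t$. One also has to check that Remark~\ref{rem:3} really captures all of them, together with the correct additive freedom $C/\lambda$ in $b$. Finally, the sign change of $a(x)=x$ across $x=0$, where the equation switches between hyperbolic and elliptic type, must not break analyticity. My first attempt would be the parity argument: uniqueness in the Cauchy--Kovalevskaya theorem for data $(0,H')$ forces oddness in $t$, and power series in $t^2$ then give analyticity in $x$ on both sides.
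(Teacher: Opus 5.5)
Your proposal is correct. In (C1)/(C2) it follows the paper: the substitution $x=t^2/4$, the identification with the degenerate case of Remark~\ref{rem:3}, and the parity argument that gives analyticity of $\lambda=2\widetilde\lambda/t$ and of $b$ in $x$. In (C4) it also follows the paper, except that your power-series relation $j(2j+1)\lambda_j=0$ replaces the explicit general solution $\psi(x)|y|^{-1/2}+\varphi(x)$. One slip: your displayed relation for $b\,p_xp_y$ is garbled. Since $p_x=\tfrac{2}{t}p_t$, one has $b\,p_xp_y=\tfrac{2}{t}\,b\,p_tp_y$; your conclusion $\widetilde b=b/\sqrt{x}$ is nevertheless right.

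The genuine difference is in (C3).

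\emph{The paper's route.} It passes to $\widetilde x=\ln x$, $\widetilde y=\ln y$ on the quadrant $x,y>0$, where $F$ takes the generic form (A1). It then imports the solution $\widetilde\lambda=\widetilde X(\widetilde x+\widetilde y)+\widetilde Y(\widetilde x-\widetilde y)$, gets $\lambda=(X(xy)+Y(x/y))/(xy)$, and asserts that analytic extendability to $(0,0)$ forces $Y$ to be constant (``it can be easily shown'').

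\emph{Your route.} You factor the equation as $2(\theta_y-\theta_x)(\theta_x+\theta_y+2)\lambda=0$. This is correct: $2y^2\partial_y^2+6y\partial_y=2\theta_y(\theta_y+2)$, and the zeroth-order term drops because $c''-a''=0$. The operator acts diagonally on monomials, giving $(j-i)(i+j+2)\lambda_{ij}=0$, hence $\lambda=\tfrac12 h(xy)$ directly on a full neighbourhood of the origin. This supplies exactly the step the paper leaves unproved, and your particular solution $\lambda b=-2xy\lambda$ checks out.

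\emph{What each buys.} The paper's route is conceptual: it shows (C3) as a logarithmic blow-down of the generic case with two real eigenvalues, and it gives the general local solution on the open quadrant, not only the part analytic at the origin. Yours is shorter and fully rigorous in the analytic category.

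Your worry about the type change across $x=0$ is already settled by your own construction. Because $\lambda$ is analytic on a full neighbourhood, $\widetilde\lambda(t,y)=\tfrac t2\lambda(t^2/4,y)$ is analytic and odd for all small real $t$. So the identification with Remark~\ref{rem:3} holds on a full $(t,y)$-neighbourhood, $x<0$ follows by analytic continuation, and no separate Cauchy--Kovalevskaya parity argument is needed.
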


\begin{proof}  We start with Cases (C1) and (C2) from Proposition \ref{int norm forms}, that is,
$g = 2\lambda (x,y)\dd x \dd y$ and $F = x p_x^2 + b(x,y) p_x p_y + c(y) p_y^2$.  

Let us make a (singular) transformation $x=\frac{\widetilde x^2}{4}$    (in the domain $x>0$) which brings $g$ and $F$ to the following form
$$
g = 2\, \widetilde \lambda (\widetilde x, y) \, \dd \widetilde x \,\dd y
\quad\mbox{and}\quad
F =  p_{\widetilde x}^2 + \widetilde b (\widetilde x, y)  p_{\widetilde x} \, p_y + c(y) p_y^2
$$
where $\widetilde \lambda (\widetilde x, y) = \frac{\widetilde x}{2} \, \lambda \left(\tfrac{\widetilde x^2}{4} , y\right)$ and 
$\widetilde b (\widetilde x, y) = \tfrac{2}{\widetilde x}\, b\left(\tfrac{\widetilde x^2}{4},y\right)$.   The description of such pairs $g$ and $F$ is given in Proposition \ref{prop:2}  and,  more specifically for $g$ which becomes degenerate for $\widetilde x =0$, in Remark \ref{rem:3}.  Hence we conclude that 
$$
\tfrac{\widetilde x}{2} \, \lambda \left(\tfrac{\widetilde x^2}{4} , y\right) = \widetilde \lambda (\widetilde x, y) = 
\frac{X\bigl( \rho(\widetilde x,y)\bigr) - X\bigl( \rho(-\widetilde x,y)\bigr)}{f(y)}
$$
and
$$
\tfrac{2}{\widetilde x}\, b\left(\tfrac{\widetilde x^2}{4},y\right) = \widetilde b (\widetilde x, y) =  -2 \frac{X\bigl(\rho (\widetilde x,y)\bigr) + X\bigl(\rho(-\widetilde x, y)\bigr)}{\lambda( \widetilde x,y)}   
$$
where $X'(0)\ne 0$.   In terms of the original variables $x$ and $y$, we have
$$
\lambda \left(x , y\right) =  \lambda \left(\tfrac{\widetilde x^2}{4} , y\right)   = 2 \, \frac{\widetilde \lambda ( \widetilde x, y)}{\widetilde x } =
\frac{X\bigl( \rho(2\sqrt{x},y)\bigr) - X\bigl( \rho(-2\sqrt{x},y)\bigr)}{f(y)\sqrt{x}}.
$$
Notice that $\widetilde \lambda ( \widetilde x, y)$ is real analytic in $\widetilde x$ and $y$  and moreover is odd w.r.t. $\widetilde x$. Hence, this function is {\it divisible} by $\widetilde x$, and therefore 
$\lambda \left(\tfrac{\widetilde x^2}{4} , y\right) = 2\frac{\widetilde \lambda ( \widetilde x, y)}{\widetilde x}$ is real analytic in variables $\widetilde x$, and $y$.  Moreover,  $\lambda \left(\tfrac{\widetilde x^2}{4} , y\right)$ is even w.r.t. $\widetilde x$, which implies that $\lambda (x,y) = \lambda \left(\tfrac{\widetilde x^2}{4} , y\right)$ is real analytic as a function of $x= \frac{\widetilde x^2}{4}$ and $y$.

Similarly, 
$$
 b\left(x,y\right)  = b\left( \tfrac{\widetilde x^2}{4}, y   \right)=- 2 \frac{X\bigl(\rho (\widetilde x,y)\bigr) + X\bigl(\rho(-\widetilde x, y)\bigr)} {\widetilde \lambda(\widetilde x,y) / \widetilde x} = -  2 \sqrt{x} f(y) \frac{X\bigl(\rho (2\sqrt{x},y)\bigr) + X\bigl(\rho(-2\sqrt{x}, y)\bigr)}{X\bigl(\rho ( 2\sqrt{x},y)\bigr) - X\bigl(\rho(-2\sqrt{x}, y)\bigr)}.
$$

Here the numerator $X\bigl(\rho (2\sqrt{x},y)\bigr) + X\bigl(\rho(-2\sqrt{x}, y)\bigr)$ and the denominator $\widetilde \lambda(\widetilde x,y) / \widetilde x$ are both real analytic in $\widetilde x$ and $y$ and also even w.r.t. $\widetilde x$.  Moreover, the denominator does not vanish at $(\widetilde x, y) = (0,0)$.  Hence, the whole expression is real analytic in variables $\widetilde x$ and even w.r.t. $\widetilde x$.  Therefore, 
$b\left(x,y\right)  = b\left( \tfrac{\widetilde x^2}{4}, y   \right)$ is real analytic as a function of $x= \frac{\widetilde x^2}{4}$ and $y$.  The above formulas work in the region $x>0$,  but due to analyticity,   for $x\le 0$ also.

To summarise this discussion, we conclude that normal forms for $g$ and $F$ in cases (C1) and (C2) are obtained from the normal forms in cases (B1) and (B2)  by  transformation  $x = \frac{\widetilde x^2}{4}$   and replacing $Y$ with $X$,  assuming that  $X'(0)\ne 0$.

This leads us to formulas  \eqref{eq:C1.2}, \eqref{eq:C1.3} and \eqref{eq:C2}  as corollaries of  \eqref{eq:B1.2}, \eqref{eq:B1.3} and \eqref{eq:B2}  respectively.    To get \eqref{eq:C1.1}  (case (C1.1)), we recall from the proof of Theorem \ref{thm2} that the function $\rho$ in (B1.1) takes the form 
$\rho(x,y)=(\frac{x}{2} + \sqrt{y})^2$.   Hence, $X \bigl(\rho ( 2\sqrt{x},y)\bigr) = X \bigl( ( \sqrt{x} + \sqrt{y} )^2 \bigl)$.  Similar to the proof of Theorem \ref{thm2},  we set  $X \bigl( ( \sqrt{x} + \sqrt{y} )^2 \bigl) = \widetilde X \bigl(\sqrt{x} + \sqrt{y}\bigr)$ where $\widetilde X$ is a real analytic {\it even} function.  

\weg{
It remains to show that the expressions \eqref{eq:C?}  are real analytic in $x$ and $y$ in neighbourhood of $(x,y)=(0,0)$ and explain the condition $X'(0)\ne 0$.   
}

\medskip

Now, consider case (C3) where
$$
g = 2\lambda(x,y) \, \dd x \dd y, \quad F=x^2p^2_x+b(x,y)p_xp_y+y^2p^2_y,
$$

In the domain $x>0$, $y>0$,   consider the coordinate transformation $\widetilde x = \ln x$ and $\widetilde y = \ln y$.   Then 
$$
g = 2\widetilde \lambda(\widetilde x, \widetilde y) \, \dd \widetilde x \dd \widetilde y, \quad F=p^2_{\widetilde x}+ \widetilde b(\widetilde x, \widetilde y)p_{\widetilde x}p_{\widetilde y}+p^2_{\widetilde y},
$$
where $\widetilde \lambda (\ln x, \ln y) = xy \lambda (x,y)$.    In the coordinates $\widetilde x, \widetilde y$,  the integral $F$ is of the form (A1)  from Proposition \ref{int norm forms}  that corresponds to the generic case studied in \cite{bolsinov-matveev-pucacco2009}.   In this case,  $\widetilde \lambda (\widetilde x, \widetilde y)$  takes the form
$$
\widetilde \lambda (\widetilde x, \widetilde y) = \widetilde X(\widetilde x + \widetilde y) + \widetilde Y(\widetilde x - \widetilde y) 
$$
for some functions $X$ and $Y$.     Hence, in this domain we obtain the following formula for $\lambda (x,y)$:
$$
\lambda (x,y) =  \frac{\widetilde X(\ln x + \ln y) + \widetilde Y(\ln x - \ln y)}{xy} =
\frac{\widetilde X(\ln (xy) ) + \widetilde Y(\ln \tfrac{x}{y})}{xy} = \frac{X(xy) +  Y(\tfrac{x}{y})}{xy}, 
$$
where $X(t) = \widetilde X(\ln t)$ and $Y(t) = \widetilde Y(\ln t)$.
In the domain $x>0$, $y>0$, the functions $X$ and $Y$ can be arbitrary.    However, in our case they must be such that the left-hand side of the above formula extends up to a real analytic function in a neighbourhood of $(x,y)=(0,0)$.  It can be easily shown that this condition implies that $Y(\frac{x}{y}) = \mathrm{const}$.  Moreover, without loss of generality we may assume that this constant is zero  (otherwise we simply replace $Y$ with $Y - \mathrm{const}$ and $X$ with $X + \mathrm{const}$).    Thus, $\lambda (x,y) = \frac{X(xy)}{xy}= h(xy)$ for some function $h$ which has to be real analytic  and $h(0) = \lambda(0,0)\ne 0$.    The formula for $b(x,y)$ can be easily obtained by solving \eqref{poisson br} with $\lambda = h(xy)$, $a=x^2$ and $c=y^2$.

\medskip

It remains to treat the last case with the pre-normal form (C4) from Proposition \ref{int norm forms}. We have $g =2 \lambda(x,y) \dd x \dd y$ with quadratic  integral  $F = y p^2_y  + b(x,y) p_xp_y$. 
 
To reconstruct $\lambda$ and $b$ we need to resolve the PDE system \eqref{poisson br} which now takes the form
    $$
    \begin{cases}
    (\lambda b)_y = 0 \\
    (\lambda b)_x + \lambda + 2 y\lambda_y = 0
\end{cases}
    $$
  Differentiating the second equation by $y$ gives:
  $$
  3\lambda_y + 2y\lambda_{yy} = 0
  $$
  The general solution to this equation for $y > 0$ is $\lambda(x,y) = \psi(x) y^{-\frac{1}{2}} + \varphi(x)$ and for $y < 0$ is $\lambda(x,y) = \psi(x) (-y)^{-\frac{1}{2}} + \varphi(x)$, or in general $\lambda(x,y) = \psi(x) |y|^{-\frac{1}{2}} + \varphi(x)$.
  Hence, the only analytic solution is $\lambda(x,y) = \varphi(x)$, where $\varphi(x)$ is analytic in a neighbourhood of zero. Hence, $g = 2\varphi(x) \dd x\dd y$, and changing $x$  we can reduce it to $g=2\,\dd x\dd y$ with $\lambda(x,y) \equiv 1$.  Then the above equation for $b$ becomes simply $b_y = 0$, $b_x + 1=0$.   Hence $b=-x - C$  with some constant $C\in \R$.
Finally, we get $g = 2\dd x \dd y$, $F = yp^2_y - (x+C) p_xp_y$, as stated.
\end{proof}

\section{Examples}

Below we give a few explicit examples of normal forms for $g$ and $F$ from Theorems \ref{thm1} and \ref{thm2}.  They are obtained by choosing the simplest possible parameters $X$, $h$ and $H$ in $B$ and $C$ series.

\begin{itemize}
    \item[(B 1.1)] $m = 0$, $s= 1$; setting $X(t)=t$ in \eqref{eq:B1.1} gives
    $$
    g = 2 \, \dd x\, \dd y, \quad F = p^2_x + y\,p^2_y - x\,p_xp_y.
    $$
    \item[(B 1.2)] $m = 0$, $s = 2$; setting $h \equiv 1$, $H \equiv 0$ in \eqref{eq:B1.1} gives 
    $$
    g = 2 \cosh x \, \dd x \, \dd y, \quad F = p^2_x + y^2p^2_y - 2y \tanh x \, p_xp_y.
    $$
 It is easy to see that $g$ is flat and the transformation $u = \sinh x $ brings $g$ and $F$ to the following form 
$$
g = 2\,\dd u \,\dd y, \quad F = (1 + u^2)\,p^2_u + y^2\, p^2_y - 2 yu\, p_up_y.
$$

    \item[(B 1.3)]  $m = 0$, $s\ge 3$; setting $h \equiv 1$, $H \equiv 0$ in \eqref{eq:B1.3} gives 
   for $s =3$:
$$
    g = 2 \frac{1 + 3x^2 y}{(1 - x^2 y)^3} \, \dd x \, \dd y
    \quad\mbox{and}\quad
    F = p_x^2 + 4y^3 p_y^2 - \frac{4x y^2 (3 + x^2 y)}{1 + 3x^2 y}  p_x p_y,
$$
    and for $s = 4$:
$$
g 
   = 2 \frac{1+x^2y^2}{(1-x^2y^2)^2}\,\dd x\,\dd y,  \quad\mbox{and}\quad
F = p_x^2 + y^4 p_y^2 - \frac{4x y^3}{1+x^2y^2}\,p_x p_y.
$$

For  $s>4$, the formulas become more complicated.

    \item[(B 3.1)] 
    $m = 0, s = 2$; setting $h \equiv 1$, $H \equiv 0$ in \eqref{eq:B3.1} gives
    $$
    g = 2 \cos x \, \dd x \dd y, \quad F = -p^2_x + y^2p^2_y - 2y\tan x \, p_xp_y.
    $$
    
The transformation  $u = \sin(x)$ brings $g$ and $F$ to a simpler form 
$$
g = 2 \, \dd u \dd y, \quad F = (up_u - yp_y)^2 - p^2_u.
$$

    \item[(B 3.2)]
    $k=2$;   setting $h \equiv 1$, $H \equiv 0$ in \eqref{eq:B3.2} gives
    $$
    g = \frac{2(1 - x^2 y^2)}{(1 + x^2 y^2)^2} \, \dd x  \dd y
    \quad
    F = -p_x^2 + y^4 p_y^2 + \frac{4x y^3}{1 - x^2 y^2} \, p_x p_y.
    $$
    
    \end{itemize}

\begin{itemize}
\item[(C1.1)]
$m=1, s= 1$; setting $X(t) = t^2$ in  \eqref{eq:C1.1} gives
$$
g = 4\, \dd x\dd y, \quad F = x\, p^2_x + y\, p^2_y - (x+y)\, p_xp_y. 
$$

\item [(C1.2)]
$m = 1$, $s = 2$, setting $X(t) = t$ in \eqref{eq:C1.2} gives
$$
g = 2 \frac{\sinh\left(2\sqrt{x}\right)}{\sqrt{x}} \, \dd x \dd y, \quad F = x\, p^2_x + y^2p^2_y - 2y\sqrt{x}\coth\left(2\sqrt{x}\right)p_xp_y.
$$

After coordinate change $u = \cosh\left(2\sqrt{x}\right) - 1$, we have
$$
g = 2\,\dd u \dd y, \quad F = (up_u - yp_y)^2 + 2p_u(up_u - yp_y).
$$

\item [(C1.3)] 
$m = 1$, $s \ge 3$; setting $X(t) = t$ in \eqref{eq:C1.3} gives for $s=3$
$$
g = \frac{4}{(1-4xy)^2}\,\dd x\dd y,
\quad
F = x \,p_x^2 + 4y^3 p_y^2 - (y + 4xy^2)p_x p_y,
$$
and for $s=4$
$$
g = \frac{4}{1-4xy^2}\,\dd x\dd y, 
\quad
F = x p_x^2 + y^4 p_y^2 - y p_x p_y.
$$

\item[(C3)] $m = 2, s = 2$; setting $h\equiv1$ in \eqref{eq:C3} gives
$$
g = \dd x \dd y, \quad F = (xp_x - yp_y)^2 = x^2p^2_x - 2xyp_xp_y + y^2p^2_y. 
$$

\end{itemize}

It is not very surprising that the simplest examples in the series from Theorems \ref{thm2} and \ref{thm3} lead to a flat metric $g$.  However, flat metrics may also appear for a `non-trivial' choice of functions $X$, $h$ and $H$. Here is one of such examples.  Consider the series (C1.1) given by \eqref{eq:C1.1} with $X = \cosh t - 1$, then 
\begin{equation}
        \label{eq:C1.1add}                           
        g = 2 \frac{\sinh\sqrt{x} \sinh\sqrt{y}}{\sqrt{x}\sqrt{y}}\, \dd x \dd y, \quad
         F = x\, p^2_x  + y\, p^2_y - 2 \sqrt{x}\sqrt{y} \frac{\cosh\sqrt{x} \cosh \sqrt{y} - 1}{\sinh \sqrt{x} \sinh \sqrt{y}}p_xp_y.
\end{equation} 
The transformation $u = \cosh \sqrt{x} -1$,  $v = \cosh \sqrt{y} -1$ brings $g$ and $F$ to the standard form 
$$
g = 2\dd u \dd v, \quad F = (2u + u^2)p^2_u - 2(u + v + uv)p_up_v + (2v + v^2)p^2_v
$$


\section{Summary}\label{sect5}

This section summarises local classification results on geodesically equivalent  Riemannian and pseudo-Riemannian metrics in dimension 2.  Recall that in dimension 2, the description of pairs $(g, \bar g)$ of geodesically equivalent  metrics reduces to a description of metrics $g$ whose geodesic flows admit a quadratic integral $F$  (see  \cite{matveev-topalov2001} and Section \ref{sect1}).  The relation between $F$ and $\bar g$ is given by 
\begin{equation}\label{metric}
    \bar g \;=\; \frac{1}{(\det g\;\det F)^2}\;g\,F\,g.
\end{equation}
Below we list normal forms for $(g, F)$  so that the corresponding normal forms for $(g, \bar g)$ follow automatically from \eqref{metric}.

Let  $F$ be a nontrivial quadratic integral of the geodesic flow of a metric $g$ given in a neighbourhood of $\mathsf p = (0,0)\in\R^2$, and $L =  \det g \cdot \det F \cdot g^{-1}F^{-1}$  be an operator whose algebraic structure allows us to distinguish different types of generic and singular points as explained in Section \ref{sect2}. Then there exist coordinates $(x,y)$ in which $g$ and $F$ (perhaps up to rescaling $F \mapsto \mathrm{const}\cdot F$) take one of the following normal forms depending on the algebraic properties of $L$. 

\medskip 

\textbf{Riemannian case.}

\begin{itemize}
\item $\mathsf p$ is generic, $L$ has two different real eigenvalues: classical result from 1869 U.~Dini \cite{dini1869}, in modern language \cite{bolsinov-matveev-fomenko1998} (Theorem 5).

    $$
g = \Big( f(x) + g(y) \Big)(\dd x^2 + \dd y^2), \quad F = \frac{-f(x)p^2_y + g(y)p^2_x}{f(x) + g(y)},
$$
where $f(x), g(y)$ are smooth functions, $f(x) + g(y) \ne 0$.

\item $\mathsf p$ is singular,  $L(\mathsf p)$ is scalar, while generically $L$ has two different non-constant real eigenvalues: see \cite{bolsinov-matveev-fomenko1998} (Theorem 6, part 2):

     $$
g = \frac{h(x + r) - h(x - r)}{2r} (\dd x^2 + \dd y^2), \quad F = - r\frac{h(x + r) + h(x - r)}{h(x + r) - h(x - r)}(p^2_x + p^2_y) + (xp^2_x + 2yp_xp_y - xp^2_y),
$$
where $r = \sqrt{x^2+y^2}$, $h(t)$ is a smooth function in a neighbourhood of $t=0$ and  $h'(0) > 0$.

\item $\mathsf p$ is singular,  $L(\mathsf p)$ is scalar,  while generically $L$ has one constant and one non-constant eigenvalue: see \cite{bolsinov-matveev-fomenko1998} (Theorem 6, part 1):

$$
g = f(x^2+y^2) (\dd x^2 + \dd y^2), \quad F = (yp_x - xp_y)^2 + \operatorname{const} \frac{p_x^2 + p_y^2}{f(x^2 + y^2)},
$$
where $f(t)$ is a smooth positive function.

\end{itemize}

\textbf{Pseudo-Riemannian case.}

\begin{itemize}

\item  $\mathsf p$ is generic: see \cite{bolsinov-matveev-pucacco2009} Theorem 1.

\begin{itemize}
    \item $L$ has two different real eigenvalues: 
    $$
    g = \big(X(x) - Y(y)\big)(\dd x^2 - \dd y^2), \quad F = \frac{X(x)p^2_y - Y(y)p^2_x}{X(x) - Y(y)},
    $$
    where $X(x)$, $Y(y)$ are smooth functions and $X(x) - Y(y) \ne 0$.

    \item $L$ has two complex conjugate eigenvalues: 
    $$
    g = \operatorname{Im}(h)\, \dd x \dd y, \quad F = p^2_x - p^2_y + 2\, \frac{\operatorname{Re}(h)}{\operatorname{Im}(h)}\, p_x p_y,
    $$
    where $h(x + \ii y)$ is a holomorphic function in variable $z=x + \ii y$, and $\operatorname{Im}h(0) \ne 0$.

    \item $L$ is a Jordan block:
    $$
    g = \big(1 + xY'(y)\big) \dd x \dd y, \quad F = p^2_x - 2\, \frac{Y(y)}{1 + xY'(y)}\, p_x p_y,
    $$
    where $Y(y)$ is a smooth function in some neighbourhood of $\mathsf p$.
\end{itemize}

\item $\mathsf p$ is singular  \footnote{These results are obtained under the assumption that $g$ and $F$ are real analytic.}

\begin{itemize}
    \item $ L(\mathsf p)$ is a Jordan block,  while generically $L$ has two different eigenvalues (real or complex): 

the normal forms are given by formulas \eqref{eq:B1.1}-\eqref{eq:B4} from Theorem \ref{thm2}.

\item $ L(\mathsf p)$ is a scalar operator, while generically $L$ has two different eigenvalues (real or complex): 

the normal forms are given by formulas \eqref{eq:C1.1}-\eqref{eq:C3} from Theorem \ref{thm3}.

\item $ L(\mathsf p)$ is a scalar operator, while generically $L$ is a Jordan block: 

the normal forms are given by \eqref{eq:C4} from Theorem \ref{thm3}.

\end{itemize}

\end{itemize}


\begin{thebibliography}{99}

\bibitem{aminova2003}
A. V. Aminova, \textit{Projective transformations of pseudo-Riemannian manifolds. Geometry, 9}, J. Math. Sci. (N. Y.) 113(2003), no. 3, 367–470.

\bibitem{beltrami1865}
E. Beltrami, \textit{Risoluzione del problema: riportare i punti di una superficie sopra un piano in modo che le linee geodetiche vengano rappresentate da linee rette}, Ann. Mat., 1(1865), no. 7, 185–204.

\bibitem{beltrami1868}
E. Beltrami, \textit{Saggio di interpetrazione della geometria non-euclidea}, Giornale di matematiche, vol. VI(1868).

\bibitem{beltrami1869}
E. Beltrami, \textit{Teoria fondamentale degli spazii di curvatura costante}, Annali di Mat., ser. II, 2(1869), 232–255.

\bibitem{Bolsinov-2025}
A. Bolsinov, \textit{Geodesically Equivalent Metrics and Nijenhuis Geometry}, Sbornik: Mathematics, 2025, 216, 5, 5--32.

\bibitem{BKM-1}
A.V. Bolsinov, A.Y. Konyaev, V.S. Matveev, \textit{Nijenhuis geometry}, Advances in Mathematics, \textbf{394} (2022), 108001.

\bibitem{BKM-3}
A.V. Bolsinov, A.Y. Konyaev, V.S. Matveev, \textit{Nijenhuis geometry III: gl-regular Nijenhuis operators}, Rev. Mat. Iberoam., \textbf{40} (2024), no. 1, 155--188.

\bibitem{nijapp5} 
A. V. Bolsinov, A. Yu. Konyaev, V. S. Matveev, \textit{Applications of Nijenhuis Geometry V: geodesic equivalence and finite-dimensional reductions of integrable quasilinear systems}, Journal of Nonlinear Science (2024) 34:33 DOI: 10.1007/s00332-023-10008-0.

\bibitem{tams} 
A. V. Bolsinov, V. S. Matveev, \textit{Local normal forms for geodesically equivalent pseudo-Riemannian metrics}, Trans. Amer. Math. Soc. {\bf 367} (2015) no. 9, 6719--6749.

\bibitem{eqm} A.\,V.\,Bolsinov, V.\,S.\,Matveev, Geometrical interpretation of Benenti systems, J. Geom. Phys. {\bf 44}(2003), no.~4, 489--506.

\bibitem{bolsinov-matveev-fomenko1998}
A. V. Bolsinov, V. S. Matveev, A. T. Fomenko, \textit{Two-dimensional Riemannian metrics with integrable geodesic flows. Local and global geometry}, Sb. Math., \textbf{189}:10 (1998), 1441–1466.

\bibitem{bolsinov-matveev-pucacco2009}
A. V. Bolsinov, V. S. Matveev, G. Pucacco, \textit{Normal forms for pseudo-Riemannian 2-dimensional metrics whose geodesic flows admit integrals quadratic in momenta}, Journal of Geometry and Physics, \textbf{59}(7), (2009), 1048--1062.


\bibitem{bryant2008}
R. L. Bryant, G. Manno, V. S. Matveev, \textit{A solution of a problem of Sophus Lie: Normal forms of 2-dim metrics admitting two projective vector fields}, Math. Ann. 340(2008), no. 2, 437–463.

\bibitem{bryant2009}
R. L. Bryant, M. Dunajski, M. Eastwood, \textit{Metrisability of two-dimensional projective structures}, J. Diff. Geom. 83(2009), no. 3, 465–499.

\bibitem{dini1869}
U. Dini, \textit{Sopra un problema che si presenta nella teoria generale delle rappresentazioni geografiche di una superficie su un’altra}, Annali di Matematica, Serie 2, \textbf{3} (1869), 269–293.

\bibitem{eastwood2007}
M. Eastwood, V. S. Matveev, \textit{Metric connections in projective differential geometry}, Symmetries and Overdetermined Systems of PDEs, IMA Vol. Math. Appl., 144(2007), 339–351.

\bibitem{eisenhart1923}
L. P. Eisenhart, \textit{The geometry of paths and general relativity}, Ann. of Math. (2) 24(1923), no. 4, 367–392.

\bibitem{eisenhart1927}
L. P. Eisenhart, \textit{Non-Riemannian Geometry}, American Mathematical Society Colloquium Publications VIII(1927).

\bibitem{fubini1903}
G. Fubini, \textit{Sui gruppi transformazioni geodetiche}, Mem. Acc. Torino 53(1903), 261–313.

\bibitem{ilyashenko2013}
Yu. Ilyashenko, S. Yakovenko, \textit{Lectures on Analytic Differential Equations}, AMS, 2007, Graduate Studies in Mathematics, 86.

\bibitem{kiosak2009}
V. Kiosak, V. S. Matveev, \textit{Complete Einstein metrics are geodesically rigid}, Comm. Math. Phys. 289(2009), no. 1, 383–400.

\bibitem{kolokoltsov1983}
V. N. Kolokoltsov, \textit{Geodesic flows on two-dimensional manifolds with an additional first integral that is polynomial in the velocities}, Math. USSR-Izv., \textbf{21}:2 (1983), 291–306.

\bibitem{lagrange1779}
J.-L. Lagrange, \textit{Sur la construction des cartes géographiques}, Noveaux Mémoires de l’Académie des Sciences et Bell-Lettres de Berlin, 1779.

\bibitem{levicivita1896}
T. Levi-Civita, \textit{Sulle trasformazioni delle equazioni dinamiche}, Ann. di Mat., serie 2a, 24(1896), 255–300.

\bibitem{lie1882}
S. Lie, \textit{Untersuchungen über geodätische Kurven}, Math. Ann. 20 (1882); Sophus Lie Gesammelte Abhandlungen, Band 2, erster Teil, 267–374. Teubner, Leipzig, 1935.

\bibitem{liouville1889}
R. Liouville, \textit{Sur les invariants de certaines équations différentielles et sur leurs applications}, Journal de l’École Polytechnique 59 (1889), 7–76.

\bibitem{matveev2007}
V. S. Matveev, \textit{Proof of projective Lichnerowicz-Obata conjecture}, J. Diff. Geom. 75(2007), 459–502.

\bibitem{matveev2012}
V. S. Matveev, \textit{Two-dimensional metrics admitting precisely one projective vector field}, Math. Ann. 352(2012), 865–909.

\bibitem{matveev2012_gr}
V. S. Matveev, \textit{Geodesically equivalent metrics in general relativity}, J. Geom. Phys., 62(2012), no. 3, 675–691.

\bibitem{matveevtopalov1998}
V. S. Matveev, P. J. Topalov, \textit{Trajectory equivalence and corresponding integrals}, Regular and Chaotic Dynamics, 3(1998), no. 2, 30–45.

\bibitem{matveev-topalov2001}
V. Matveev, P. Topalov, \textit{Quantum integrability of Beltrami-Laplace operator as geodesic equivalence}, Math. Z., \textbf{238} (2001), 833--866.

\bibitem{mikes1996} J. Mikeš, \textit{Geodesic mappings of affine-connected and Riemannian spaces. Geometry, 2}, J. Math. Sci. 78(1996), no. 3, 311–333.

\bibitem{painleve1897}
P. Painlevé, \textit{Sur les intégrales quadratiques des équations de la dynamique}, Compt. Rend., 124(1897), 221–224.



\bibitem{thomas1945}
T. Thomas, \textit{On the projective theory of two dimensional Riemann spaces}, Proc. Nat. Acad. Sci. U. S. A. 31(1945), 259–261.

\bibitem{veblen1923}
O. Veblen, T. Thomas, \textit{The geometry of paths}, Trans. Amer. Math. Soc. 2(1923), no. 4, 551–608.

\bibitem{veblen1926}
O. Veblen, J. Thomas, \textit{Projective invariants of affine geometry of paths}, Ann. of Math. (2) 27(1926), no. 3, 279–296.

\bibitem{weyl1921}
H. Weyl, \textit{Zur Infinitisimalgeometrie: Einordnung der projektiven und der konformen Auffassung}, Nachr. d. K. Ges. d. Wiss. zu Göttingen, Math.-Phys. Kl., 1921.

\bibitem{yano1940}
K. Yano, \textit{Concircular Geometry}, I–IV, Proc. Imp. Acad. Tokyo, \textbf{16} (1940).

\bibitem{yano1956}
K. Yano, ``Sur la correspondence projective entre deux espaces pseudohermitiens,'' \textit{C.R. Acad. Sci.}, \textbf{239} (1956), 1346--1348.

\bibitem{yano-nagano1957}
K. Yano and T. Nagano, ``Some theorems on projective and conformal transformations,'' \textit{Koninkl. Nederl. Akad. Wet.}, \textbf{A60}, No. 4 (1957), 451--458.





\end{thebibliography}
\end{document}